\newtheorem{theorem}{Theorem}[section]
\newtheorem*{theorem*}{Theorem}
\newtheorem{lemma}[theorem]{Lemma}
\newtheorem{proposition}[theorem]{Proposition}
\newtheorem*{question*}{Question}
\theoremstyle{definition}
\newtheorem{definition}[theorem]{Definition}
\newtheorem{remark}[theorem]{Remark}
\newtheorem{example}[theorem]{Example}
\newtheorem{algorithm}[theorem]{Algorithm}
\newtheorem{question}[theorem]{Question}
\newtheorem{conjecture}[theorem]{Conjecture}
\newcommand{\newabstract}[1]{%
  \par\bigskip
  \item[\hskip\labelsep\scshape R\'esum\'e.]
}
\begin{document}

\title[Trisection diagrams and twists of 4-manifolds]{Trisection diagrams and twists of 4-manifolds}

\author[P. Naylor]{Patrick Naylor}
%\thanks{Date: \today}
\address{Department of Mathematics, Princeton University, Princeton NJ 08544}
\email{patrick.naylor@princeton.edu}
\urladdr{https://patricknaylor.org}

%\subjclass[2010]{Primary 57M12, 57R22; Secondary 55M20, 57R55}
%\keywords{}

\begin{abstract}
 A theorem of Katanaga, Saeki, Teragaito, and Yamada relates Gluck and Price twists of 4-manifolds. Using trisection diagrams, we give a purely diagrammatic proof of this theorem, and answer a question of Kim and Miller.

\newabstract{french}
Un th\'eor\`eme de Katanaga, Saeki, Teragaito, and Yamada \'etablit une connexion entre des torsions de Gluck et Price. On donne une nouvelle d\'emonstration de ce th\'eor\`eme en utilisant des diagrammes de trisection, et répond à une question de Kim et Miller.
\end{abstract}

\maketitle

%%%%%%%%%%%%%%%%%%%%%%%%%%%%%%%%%%%%%%%%%%%%%%%%%%%%%%%%
%%%%%%%%%%%%%%%%%%%%%%%%%%%%%%%%%%%%%%%%%%%%%%%%%%%%%%%%
\section{Introduction}\label{sec:intro}
%%%%%%%%%%%%%%%%%%%%%%%%%%%%%%%%%%%%%%%%%%%%%%%%%%%%%%%%
%%%%%%%%%%%%%%%%%%%%%%%%%%%%%%%%%%%%%%%%%%%%%%%%%%%%%%%%

Trisections of 4-manifolds were introduced by Gay and Kirby in 2012 as a 4-dimensional analogue of Heegaard splittings. Recently, they have been used to give new proofs of classical results \cite{Lam18} \cite{Lam19} and understand embedded surfaces in 4-manifolds \cite{MeiZup18}\cite{GayMei1806}. One main feature of trisections is that they may be represented diagrammatically, and thus offer a new perspective with which to view smooth 4-manifolds. As an example of the power of these diagrams, we give a completely new proof of a non-trivial surgery theorem using a purely diagrammatic argument.

Suppose that $S$ is an embedded 2-sphere in a 4-manifold $X$ with regular neighbourhood $N\subset X$ diffeomorphic to $S^2\times D^2$. Let $r_\theta:S^2\to S^2$ be the diffeomorphism which rotates $S^2$ by $\theta$. Originally defined by Gluck \cite{Gluck61}, the \emph{Gluck twist} of $X$ along $S$ is the 4-manifold
\[\Sigma_S(X):=(X-\textnormal{int}(N))\cup_\tau N\]

\noindent where $\tau$ is the diffeomorphism of $S^2\times S^1$ given by $\tau(x,\theta)=(r_\theta(x),\theta)$. This construction is particularly interesting when $X$ is the 4-sphere; $\Sigma_{S^4}(S)$ is a homotopy 4-sphere, and is therefore homeomorphic to $S^4$ by work of Freedman \cite{Freedman82}. However, it remains an open question whether all Gluck twists on $S^4$ are standard, i.e. diffeomorphic to $S^4$.

A similar surgery can be performed along an embedded projective plane. Suppose that $P$ is a projective plane in a 4-manifold $X$ with Euler number $\pm 2$. A regular neighbourhood of $P$ is diffeomorphic to $N_\pm$, a disk bundle over $P$ whose boundary $Q$ is the quaternion space (the quotient of $S^3$ by the action of the quaternion group). If $\phi$ is a self-diffeomorphism of $Q$, the 4-manifold
\[\Pi_{P,\phi}(X):=(X-\textnormal{int}(N_\pm))\cup_\phi N_\pm\]

\noindent is called a Price surgery along $P$. Price \cite{Pri77} studied the self-diffeomorphisms of $Q$, and showed that there are only six classes up to isotopy. In particular, up to isotopy there is only one non-trivial self-diffeomorphism of $Q$ which could be used to produce a 4-manifold homeomorphic  (but perhaps not diffeomorphic) to $X$. Consequently, the resulting 4-manifold is called the \emph{Price twist} of $X$ along $P$, and we will denote it by $\Pi_P(X)$. Like the Gluck twist, this surgery is known to produce exotic 4-manifolds in some settings \cite{Akb09}, but is most interesting in the case that $X$ is the 4-sphere. Note that by a theorem of Massey \cite{Mas69}, all embedded projective planes in $S^4$ have Euler number $\pm 2$.

In this paper, we use trisection diagrams to give an entirely new proof of the following theorem that relates these surgeries, proved by Katanaga, Saeki, Teragaito, and Yamada \cite{KatSaeTerYam99}. This is made possible by recent work on trisection diagrams of complements of surfaces in 4-manifolds; the existence of a purely trisection-diagrammatic proof of this theorem answers a question of Kim and Miller \cite{KimMil1808}.

\begin{theorem}[\hspace{1sp}\cite{KatSaeTerYam99}]\label{kstythm}
	Let $X$ be a 4-manifold. Let $S\subset X$ be an embedded sphere with Euler number 0, and let $P_\pm\subset X$ be an unknotted projective plane with Euler number $\pm2$. Then $\Sigma_S(X)$ is diffeomorphic to $\Pi_{S\#P_{\pm}}(X)$.
\end{theorem}

Trisection diagrams are very similar to Heegaard diagrams, but with three sets of curves. A diagram encodes a smooth closed 4-manifold, and after a suitable stabilization operation (as in the Reidemiester-Singer theorem for Heegaard splittings), any two diagrams for the same 4-manifold are related by a surface automorphism, and isotopy and slides of curves of each type. After carefully setting up trisection diagrams for $\Sigma_S(X)$ and $\Pi_{S\#P_{\pm}}(X)$, the proof is a step-by-step verification that these diagrams are related by allowable moves. A priori, one might expect that arbitrary stabilizations might be needed in the proof, but surprisingly this is \emph{not} the case. Consequently, the calculation in this paper provides evidence that trisection diagrams are an effective computational tool for working with smooth 4-manifolds.

%%%%%%%%%%%%%%%%%%%%%%%%%%%%%%%%%%%%%%%%%%%%%%%%%%%%%%%%
\subsection*{Organization}
%%%%%%%%%%%%%%%%%%%%%%%%%%%%%%%%%%%%%%%%%%%%%%%%%%%%%%%%

This paper is organized as follows. In Section \ref{sec:trisections}, we review trisections and trisection diagrams. In Section \ref{sec:surfacecomplements}, we briefly review work of Gay-Meier and Kim-Miller on trisection diagrams of complements of surfaces in 4-manifolds, and build the requisite diagrams. Finally, in Section \ref{sec:proof} we give a purely trisection-diagrammatic proof of Theorem \ref{kstythm}.

%%%%%%%%%%%%%%%%%%%%%%%%%%%%%%%%%%%%%%%%%%%%%%%%%%%%%%%%
\subsection*{Acknowledgements}
%%%%%%%%%%%%%%%%%%%%%%%%%%%%%%%%%%%%%%%%%%%%%%%%%%%%%%%%

This work was supported by NSERC CGS-D and CGS-MSFSS scholarships. Much of this work was done on a visit to the University of Georgia, and the author would like to thank Sarah Blackwell, David Gay, Jason Joseph, Jeffrey Meier, William Olsen, and Adam Saltz for their hospitality and many encouraging conversations, as well as his graduate advisor, Doug Park. The author would also like to thank an anonymous referee for reading this paper and providing many helpful comments.

%%%%%%%%%%%%%%%%%%%%%%%%%%%%%%%%%%%%%%%%%%%%%%%%%%%%%%%%
%%%%%%%%%%%%%%%%%%%%%%%%%%%%%%%%%%%%%%%%%%%%%%%%%%%%%%%%
\section{Trisections of 4-manifolds}\label{sec:trisections}
%%%%%%%%%%%%%%%%%%%%%%%%%%%%%%%%%%%%%%%%%%%%%%%%%%%%%%%%
%%%%%%%%%%%%%%%%%%%%%%%%%%%%%%%%%%%%%%%%%%%%%%%%%%%%%%%%

%%%%%%%%%%%%%%%%%%%%%%%%%%%%%%%%%%%%%%%%%%%%%%%%%%%%%%%%
\subsection{Trisections and trisection diagrams}
%%%%%%%%%%%%%%%%%%%%%%%%%%%%%%%%%%%%%%%%%%%%%%%%%%%%%%%%

In this section we briefly review the definition of a trisection and a trisection diagram. For more exposition the reader is referred to \cite{GayKir16}, \cite{CasGayPin18b}, and \cite{MeiZup18}.

\begin{definition}
A \emph{handlebody} of genus $g$ is a compact, orientable manifold which can be built with a single 0-handle and $g$ 1-handles.
\end{definition}

\begin{definition}[\hspace{1sp}\cite{GayKir16}]
Suppose that $X$ is a smooth, oriented, closed, and connected 4-manifold. A trisection $\mathcal{T}$ of $X$ is a decomposition $X=X_1\cup X_2\cup X_3$ such that:
	\begin{itemize}
		\item $X_i$ is diffeomorphic to a 4-dimensional handlebody of genus $k_i$;
		\item $X_i\cap X_j$ is diffeomorphic to a 3-dimensional handlebody of genus $g$;
		\item $X_1\cap X_2\cap X_3\cong \Sigma_g$, a closed surface of genus $g$.
	\end{itemize}
\end{definition}

We will refer to this as a $(g;k_1,k_2,k_3)$-trisection of $X$. When $k_1=k_2=k_3$ the trisection is called \emph{balanced}, and we refer to it as a $(g;k)$-trisection. Each $X_i$ is called a \emph{sector}, and the triple intersection is called the \emph{central surface} of $\mathcal{T}$. Note that the central surface induces a genus $g$ Heegaard splitting of $\partial X_i\cong \#^{k_i}S^1\times S^2$.

\begin{example}
The simplest trisection is of $S^4$. If $S^4\subset \mathbb{R}^5=\mathbb{C}\times \mathbb{R}^3$ is parametrized as
\[S^4=\{(re^i\theta,x,y,z):|(re^i\theta,x,y,z)|=1\},\]
then we can define three sectors by
\[X_k=\{(re^i\theta,x,y,z)\in S^4: 2\pi k/3\leq \theta\leq 2\pi (k+1)/3\}.\]
It is easy to check that each $X_k$ is a 4-ball, and that in fact $X_1\cap X_2\cap X_3$ is an unknotted 2-sphere (it is the collection of points where $r=0$). Consequently, this is a $(0;0)$-trisection of $S^4$. In fact, any $(0;0)$-trisection is diffeomorphic to this one.
\end{example}

There is a natural stabilization operation for trisections of a fixed 4-manifold.

\begin{definition}\label{StabilizationDefinition}
Suppose that $\mathcal{T}$ is a $(g;k_1,k_2,k_3)$-trisection of a 4-manifold $X$, with sectors $X_1$, $X_2$ and $X_3$. Let $\alpha\subset X_1\cap X_2$ be a properly embedded and boundary parallel arc, and define a new trisection $\mathcal{T}'$ of $X$ by:
\begin{itemize}
\item $X_1'=X_1\setminus \nu(\alpha)$;
\item $X_2'=X_2\setminus \nu(\alpha)$;
\item $X_3'=X_3\cup \nu(\alpha)$.
\end{itemize}
One can check that this decomposition is a $(g+1;k_1,k_2,k_3+1)$-trisection of $X$, and that this operation is well defined up to isotopy of trisections. The trisection $\mathcal{T}'$ is called a \emph{$3$-stabilization} (or simply \emph{stabilization}) of $\mathcal{T}$, and $\mathcal{T}$ is called a \emph{destabilization} of $\mathcal{T}'$. One can define $1$- and $2$- stabilizations analogously.
\end{definition}

The reader may wish to compare Definition \ref{StabilizationDefinition} to the usual stabilization operation for Heegaard splittings. Note that this process stabilizes the Heegaard splittings of $\partial X_1$ and $\partial X_2$, while adding an $S^1\times S^2$ summand to $\partial X_3$ (in the case of 3-stabilization). Similar to the case of Heegaard splittings, one can also view stabilization as the connected sum (respecting the trisection structure) of $\mathcal{T}$ with one of the three possible genus one trisections of $S^4$ obtained by stabilizing the trivial $(0;0)$-trisection of $S^4$.

The following fundamental result allows us to study closed 4-manifolds via trisections:

\begin{theorem*}[\hspace{1sp}\cite{GayKir16}]
Every smooth, oriented, closed, and connected 4-manifold $X$ admits a $(g;k)$-trisection for some $0\leq k\leq g$. Any two trisections of $X$ become isotopic after sufficiently many stabilizations.
\end{theorem*}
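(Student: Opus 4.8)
The plan is to establish the two assertions separately: (i) every closed, connected, oriented $X$ admits a trisection, and (ii) any two trisections of $X$ have a common stabilization, which is the usual precise form of the phrase ``isotopic after sufficiently many stabilizations''. Both are cleanest in the language of generic smooth maps $X\to\mathbb{R}^2$ (Morse $2$-functions) together with Cerf theory, which is the route I would follow; the existence statement can also be seen fairly directly from handle decompositions, which I sketch first for intuition.

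For (i), begin with a self-indexing Morse function $f\colon X\to[0,4]$ having a single index-$0$ and a single index-$4$ critical point; this exists because $X$ is connected. Then $f^{-1}[0,3/2]$, built from the $0$- and $1$-handles, is a $4$-dimensional $1$-handlebody $\natural^{k_1}S^1\times B^3$, and dually $f^{-1}[5/2,4]\cong\natural^{k_3}S^1\times B^3$. The only obstruction is the middle cobordism $W=f^{-1}[3/2,5/2]$, built from the $2$-handles, which is not yet of the required form. The key step is to isotope the attaching and belt circles of the $2$-handles so that they lie, suitably arranged, on a common Heegaard surface $\Sigma$ of the middle level $f^{-1}(2)$; after introducing finitely many cancelling handle pairs and performing handle slides this can always be achieved, for $\Sigma$ of sufficiently large genus $g$. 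One then repartitions $W$ --- roughly, a thickened copy of a handlebody cobounded by $\Sigma$ is absorbed into each outer piece, and the remainder together with the $2$-handles becomes $X_2$ --- and verifies that $X_2$ is again a $1$-handlebody, that the triple intersection is $\Sigma_g$, and that each pairwise intersection is a genus-$g$ $3$-dimensional handlebody. This bookkeeping is a bit delicate; it is cleaner to instead homotope a generic map $X\to\mathbb{R}^2$ so that its fold locus consists of concentric circles carrying only indefinite folds with monotonically varying fibre genus, and read the three pieces off the three sectors of a central disk --- essentially the argument of Gay and Kirby.

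For (ii), first pin down Gay and Kirby's stabilization operation --- connected sum with an appropriate trisection of $S^4$ --- and verify that it is well defined up to diffeomorphism. Given two trisections of $X$, realize each by a Morse $2$-function and join the two by a generic homotopy through Morse $2$-functions. Cerf-type theory presents such a homotopy as a finite sequence of elementary moves on the fold locus (of the kinds classified for indefinite Morse $2$-functions, including creations and annihilations of fold circles), and the content of the argument is that each such move either leaves the induced trisection unchanged or alters it by a single stabilization. The main obstacle --- and the technical heart of the proof --- is that a generic homotopy will also pass through \emph{destabilizing} moves, which must be eliminated. The device is to rearrange the homotopy so that all complexity-increasing moves precede all complexity-decreasing ones, in analogy with normalizing a Cerf graphic: then each of the two original trisections is obtained by destabilizing a single trisection occurring partway along the homotopy, so the two admit a common stabilization. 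Making this rearrangement rigorous, and simultaneously tracking the gluing diffeomorphisms so that the resulting abstract common stabilization is realized by an ambient isotopy, is where essentially all the work lies; everything else reduces to routine manipulations with handle slides and Heegaard stabilizations.
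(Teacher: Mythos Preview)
The paper does not prove this theorem; it is quoted verbatim from \cite{GayKir16} as background in Section~\ref{sec:trisections} and never revisited, so there is no ``paper's own proof'' against which to compare your attempt.

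That said, your outline is a faithful sketch of Gay and Kirby's actual argument in \cite{GayKir16}. Existence is indeed obtained by homotoping a generic map $X\to D^2$ to an indefinite Morse $2$-function whose fold locus is arranged radially, then reading off the three sectors; the handle-decomposition description you give first is the right intuition and can be made precise, though as you note the Morse $2$-function route is cleaner. For uniqueness, Cerf theory for Morse $2$-functions supplies a generic homotopy between any two such maps, and the crux is exactly what you identify: reordering the elementary moves so that stabilizations precede destabilizations, yielding a common stabilization. Your caveats about the technical work required to track the gluings and realize everything ambiently are well placed --- those are indeed the nontrivial parts of \cite{GayKir16} --- but nothing in your plan is wrong or missing at the level of a proof outline.
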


A key feature of trisections is that they can also be described diagrammatically. Indeed, by a classical theorem of Laudenbach and Po\'enaru \cite{LauPoe72}, a trisection is determined by its \emph{spine} (the subset $\bigcup (X_i\cap X_j)$). This in turn can be built from the Heegaard splittings of $\partial X_i$, which may be recorded with a diagram.

\begin{definition}
A \emph{$(g;k_1,k_2,k_3)$-trisection diagram} is a tuple $\mathfrak{D}=(\Sigma;\alpha,\beta,\gamma)$, where $\Sigma$ is a closed orientable surface of genus $g$, and $\alpha$, $\beta$, and $\gamma$ are collections of $g$ embedded closed curves such that:
\begin{itemize}
\item Each of $\alpha,\beta$, and $\gamma$ is a \emph{cut system of curves} for $\Sigma$, i.e. surgery on each set of curves yields $S^2$;
\item Each pair of curves is standard, i.e. each of $(\Sigma;\alpha,\beta)$, $(\Sigma;\beta,\gamma)$, and $(\Sigma;\gamma,\alpha)$ is a genus $g$ Heegaard diagram for $\#^{k_i} S^1\times S^2$.
\end{itemize}
\end{definition}

By Waldhausen's theorem, there is a unique Heegaard splitting for $\#^{k_i}S^1\times S^2$, and so any pair of $\alpha$, $\beta$, and $\gamma$ can be standardized by handle slides. However, the three sets of curves are usually not \emph{simultaneously} standard.

A trisection diagram determines a trisected 4-manifold up to diffeomorphism in the following way. Beginning with $\Sigma\times D^2$, attach thickened 3-dimensional handlebodies corresponding to the $\alpha$, $\beta$, and $\gamma$ curves along $\Sigma\times \{0\}$, $\Sigma\times \{2\pi/3\}$, and $\Sigma\times \{4\pi/3\}$ respectively, where $D^2$ is thought of as the unit disk in $\mathbb{C}$. By assumption, the three boundary components of the resulting 4-manifold are diffeomorphic to $\#^{k_i}S^1\times S^2$, and so by a theorem of Laudenbach-Po\'enaru \cite{LauPoe72} they can be uniquely filled in (up to diffeomorphism) with $\natural^{k_i}S^1\times B^3$ to obtain a closed trisected 4-manifold. 

The simplest trisection diagram encodes the $(0;0)$-trisection of $S^4$ described above. It consists of a 2-sphere, with \emph{no} curves. The reader may wish to follow the construction given above to see this is the case.  In particular, the following diagrams describe the three possible stabilizations of the trivial $(0;0)$-trisection of $S^4$. Note that exactly one sector in each trisection is diffeomorphic to $S^1\times B^3$, and the other two are diffeomorphic to $B^4$.

\begin{figure}[ht]
\centering
\includegraphics[height=.15\linewidth]{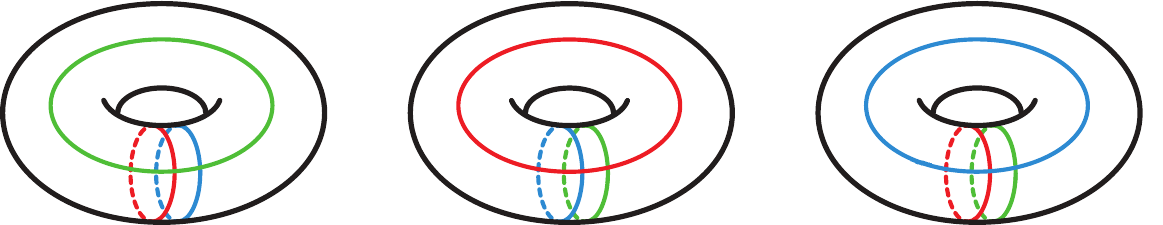}
\caption{The three genus one (unbalanced) trisection diagrams for $S^4$, obtained by stabilizing the $(0;0)$-trisection of $S^4$.}
\label{TrisectionDiagramStab}
\end{figure}

Stabilizing a trisection may also be represented diagrammatically. In general, if $\mathfrak{D}_1$ and $\mathfrak{D}_2$ are trisection  diagrams for $X_1$ and $X_2$, then $\mathfrak{D}_1\#\mathfrak{D}_2$ is a trisection diagram for the natural trisection of $X_1\# X_2$ obtained by performing the connected sum at points on the central surfaces. Note that up to handle slides and isotopy of the curves, it does not matter how this connected sum of diagrams is performed. In particular, by the remark following Definition \ref{StabilizationDefinition}, stabilization can be thought of as a connected sum with a genus one trisection of $S^4$. Consequently, we give the following definition. For more exposition, the reader is referred to \cite{MeiSchZup16}.

\begin{definition}
Suppose that $(\Sigma;\alpha,\beta,\gamma)$ is a trisection diagram for $X$. If $S$ is one of the genus one trisection diagrams for $S^4$ in Figure \ref{TrisectionDiagramStab}, then $\Sigma\# S$ is also a trisection diagram for $X$, and we call $\Sigma\# S$ a \emph{stabilization} of $\Sigma$. Conversely, $\Sigma$ is called a \emph{destabilization} of $\Sigma\# S$.
\end{definition}

Besides stabilization, there are other moves on trisection diagrams that do not change the resulting 4-manifold. In particular, isotopy of the curves in a diagram, or applying a global surface automorphism obviously do not change the resulting 4-manifold. The following theorem allows us to understand smooth closed 4-manifolds via their trisection diagrams.

\begin{theorem*}[\hspace{1sp}\cite{GayKir16}]
Every trisection of a 4-manifold can be represented by a trisection diagram. Moreover, two trisection diagrams describe diffeomorphic 4-manifolds if and only if they are related by stabilization, handle slides and isotopy of curves (among curves of the same type), and surface diffeomorphisms.
\end{theorem*}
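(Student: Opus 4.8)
The plan is to show that a trisection is recorded, up to diffeomorphism, by its \emph{spine} $\Sigma := \bigcup_{i\neq j}X_i\cap X_j$, and that a spine is in turn encoded combinatorially by a tuple $(\Sigma_g,\alpha,\beta,\gamma)$. First I would observe that $X$ is recovered from $\Sigma$. Write $\Sigma = H^{12}\cup_{\Sigma_g}H^{23}\cup_{\Sigma_g}H^{31}$, where $H^{ij}=X_i\cap X_j$ is a genus-$g$ handlebody and $\Sigma_g = X_1\cap X_2\cap X_3$ is the common boundary surface. Each piece $X_i\cong\natural^{k_i}S^1\times B^3$ is glued to the spine along $\partial X_i = H^{i-1,i}\cup_{\Sigma_g}H^{i,i+1}\cong\#^{k_i}S^1\times S^2$. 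The key input is the Laudenbach--Po\'enaru theorem: every self-diffeomorphism of $\#^{k}S^1\times S^2$ extends over $\natural^{k}S^1\times B^3$. Hence the filling of each $\partial X_i$ is unique up to diffeomorphism, so the diffeomorphism type of $X$ depends only on that of $\Sigma$.

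Next, each handlebody $H^{ij}$, with its boundary identified with $\Sigma_g$, is determined up to a diffeomorphism fixing $\Sigma_g$ pointwise by a \emph{cut system}: a collection of $g$ disjoint essential simple closed curves on $\Sigma_g$ bounding disjoint properly embedded disks that cut $H^{ij}$ into a ball. Choosing such systems for $H^{31},H^{12},H^{23}$ produces curves $\alpha,\beta,\gamma$ on $\Sigma_g$; the constraint that $\partial X_i\cong\#^{k_i}S^1\times S^2$ is precisely the requirement that each of $(\Sigma_g,\alpha,\beta)$, $(\Sigma_g,\beta,\gamma)$, $(\Sigma_g,\gamma,\alpha)$ is a genus-$g$ Heegaard diagram of a connected sum of copies of $S^1\times S^2$, and conversely any tuple satisfying this assembles a spine and hence a trisected 4-manifold. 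This yields the first sentence.

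For the ``moreover'' part, the easy direction is that each listed move preserves the diffeomorphism type of $X$: a diffeomorphism of $\Sigma_g$ carries a spine to a diffeomorphic spine; a handle slide among curves of one type replaces one cut system of a handlebody by another cut system of the \emph{same} handlebody, leaving $\Sigma$ unchanged; and a stabilization does not change $X$, being connected sum (along the trisection) with a standard trisection of $S^4$. For the converse, suppose $(\Sigma_g,\alpha,\beta,\gamma)$ and $(\Sigma_{g'},\alpha',\beta',\gamma')$ present diffeomorphic 4-manifolds. By the Gay--Kirby uniqueness theorem stated above, after a common number of stabilizations the two trisections become ambiently isotopic. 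An ambient isotopy of spines restricts to a diffeomorphism of the central surfaces carrying one triple of handlebodies to the other, hence matching the three cut systems up to isotopy; and two cut systems of a fixed handlebody differ by a finite sequence of handle slides (a Reidemeister--Singer-type fact for handlebodies), after possibly precomposing with a diffeomorphism of $\Sigma_g$. Assembling these identifications exhibits one diagram as obtained from the other by stabilizations, handle slides among curves of the same type, and diffeomorphisms.

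The main obstacle is the converse direction: converting the \emph{topological} statement ``stably equivalent after finitely many stabilizations'' into the purely \emph{combinatorial} statement about diagrams. Concretely, one must (i) track the chosen cut systems through the stabilizations and the ambient isotopy in a controlled way, and (ii) establish the handlebody Reidemeister--Singer lemma, namely that any two complete systems of meridian disks of a handlebody are related by handle slides. One also needs to check that only handle slides \emph{among curves of the same type} ever arise, which holds because each handlebody's cut system can be altered independently of the other two.
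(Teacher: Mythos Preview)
The paper does not actually prove this theorem; it is quoted as a result of Gay and Kirby \cite{GayKir16}, and the only supporting remark is the sentence preceding the statement, that ``a trisection is determined by its spine.'' Your sketch is consistent with that remark and with the argument in \cite{GayKir16}: Laudenbach--Po\'enaru shows the spine determines $X$, cut systems for the three handlebodies give the tuple $(\Sigma_g,\alpha,\beta,\gamma)$, and the converse follows from the stable-uniqueness theorem for trisections together with the handlebody Reidemeister--Singer lemma. As a proof outline this is correct, and the obstacles you flag in the last paragraph are exactly the points that require work; there is nothing in the present paper to compare against beyond the single remark about spines.
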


The reader may wish to compare this theorem with the analogous statement for Heegaard splittings. Recall that a handle slide of a curve $\alpha_1$ over $\alpha_2$ in $\Sigma$ is simply a third curve $\alpha_3$ with the property that $\alpha_1$, $\alpha_2$, and $\alpha_3$ bound an embedded pair of pants $P\subset \Sigma$. 

In general, it may not be obvious whether two trisection diagrams describe diffeomorphic 4-manifolds. If they do, arbitrarily many stabilizations might be required to relate them by handle slides. It is also usually difficult to decide if a given trisection diagram can be destabilized, since in principle, one must rearrange the curves to realize the diagram as a connected sum with one of the stabilizations in Figure \ref{TrisectionDiagramStab}. Alternatively, Lemma \ref{lem:destab} gives a slightly more practical condition that can be used to recognize a destabilization, and we will used it frequently in Section \ref{sec:proof}.

\begin{lemma}\label{lem:destab}
Suppose that $\mathfrak{D}=(\Sigma;\alpha,\beta,\gamma)$ is a trisection diagram, and that $\alpha_0\in \alpha$, $\beta_0\in \beta$, and $\gamma_0\in \gamma$ are three curves with the property that:
\begin{itemize}
\item Two of $\alpha_0$, $\beta_0$, and $\gamma_0$ are parallel;
\item The remaining curve intersects these parallel curves each exactly once.
\end{itemize}
Then $\mathfrak{D}$ can be destabilized. To do so, we can simply erase $\alpha_0$, $\beta_0$, and $\gamma_0$ from $\Sigma$ and surger $\Sigma$ along \textnormal{any} of these curves.
\end{lemma}

\begin{proof}
By hypothesis, the diagram $\mathfrak{D}$ decomposes as a connected sum $\mathfrak{D}'\#S$, where $S$ is one of the stabilized diagrams in Figure \ref{TrisectionDiagramStab}. Since destabilization is equivalent to surgering any of the curves in $S$, this completes the proof.
\end{proof}

Trisection diagrams can be quite complicated in general, but some standard 4-manifolds admit diagrams of low genus. Some examples are given below.

\begin{example}
	Figures \ref{TrisectionDiagramCP2} and \ref{TrisectionDiagramS2XS2} illustrate minimal genus diagrams of some well known simply connected 4-manifolds. Using the algorithm outlined in \cite{GayKir16}, one can convert these trisection diagrams into Kirby diagrams to verify that they describe the correct 4-manifolds.

\begin{figure}[ht]
\centering
\begin{minipage}{.5\textwidth}
  \centering
  \includegraphics[height=.3\linewidth]{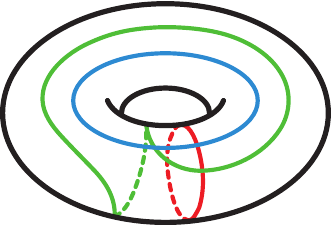}
  \captionsetup{width=.8\linewidth}
  \captionof{figure}{A $(1;0)$-trisection diagram for $\mathbb{C}\mathbb{P}^2$.}
	\label{TrisectionDiagramCP2}
\end{minipage}%
\begin{minipage}{.5\textwidth}
  \centering
  \includegraphics[height=.3\linewidth]{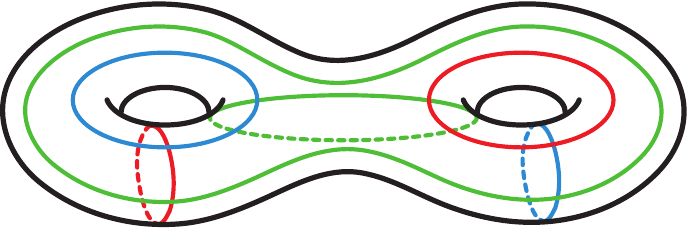}
  \captionsetup{width=.8\linewidth}
  \captionof{figure}{A $(2;0)$-trisection diagram for $S^2\times S^2$.}
	\label{TrisectionDiagramS2XS2}
\end{minipage}
\end{figure}

\end{example}

%%%%%%%%%%%%%%%%%%%%%%%%%%%%%%%%%%%%%%%%%%%%%%%%%%%%%%%%
\subsection{Trisections of 4-manifolds with boundary}
%%%%%%%%%%%%%%%%%%%%%%%%%%%%%%%%%%%%%%%%%%%%%%%%%%%%%%%%

Trisecting 4-manifolds with boundary is more technical than the closed case. A relative trisection of a 4-manifold $X$ with boundary also decomposes $\partial X$ into three pieces, and in particular induces an open book decomposition on $\partial X$. In this paper, we will only consider cases where $\partial X$ is connected.

\begin{definition}
Suppose that $\Sigma$ is an orientable, connected surface with non-empty boundary. A \emph{compression body} on $\Sigma$ is a 3-manifold $C$ obtained by attaching 3-dimensional 2-handles to a thickening of $\Sigma$, i.e.
\[C=\Sigma\times [0,1]\cup_{\Sigma\times \{1\}} \{\text{3-dimensional 2-handles}\}\]

\noindent The boundary of $C$ decomposes as $\partial C=(\partial_-\Sigma)\cup (\partial \Sigma\times [0,1])\cup(\partial_+\Sigma)$, where
\[\partial_-C=\Sigma\times \{0\},\]

\noindent and
\[\partial_+C=\partial \Sigma\setminus (\partial_-\Sigma\cup\partial\Sigma\times (0,1)).\]

\noindent We will often assume that $\partial_+C$ is connected.
\end{definition}

We will now describe specific decompositions of a 4-dimensional 1-handlebody; like the closed case, these will make up the sectors of a relative trisection.

\begin{definition}
Let $\Sigma$ be an orientable and connected surface with non-empty boundary, and let $C$ be a compression body on $\Sigma$. Note that $Z=C\times [0,1]$ is a 4-dimensional 1-handlebody. Consider the decomposition $\partial Z=\partial_{\textnormal{\hspace{0.1em}in}}Z\cup \partial_{\textnormal{out}}Z$, where
\[\partial_{\textnormal{\hspace{0.1em}in}}Z=(C\times \{0\})\cup (\partial_-C\times [0,1])\cup(C\times \{1\}),\]

\noindent and
\[\partial_{\textnormal{out}}Z=(\partial\Sigma\times [0,1]\times [0,1])\cup(\partial_+C\times [0,1]).\]

\noindent The portion $\partial_{\textnormal{\hspace{0.1em}in}}Z$ admits a (generalized) Heegaard splitting as $\partial_{\textnormal{\hspace{0.1em}in}}Z=Y^-\cup Y^+$, where
\[Y^-=(C\times \{0\})\cup (\partial_-C\times [0,1/2])\]

\noindent and
\[Y^+=(\partial_-C\times [1/2,1])\cup(C\times \{1\}).\]

\noindent In particular, the splitting surface is $Y^-\cap Y^+=\partial_-C\times \{1/2\}$. Any Heegaard splitting of $\partial_{\textnormal{\hspace{0.1em}in}}Z$ obtained from this one by stabilization is called \emph{standard}. For brevity, we will continue to denote any such decomposition of $\partial_{\textnormal{\hspace{0.1em}in}}Z$ by $(Y^+,Y^-)$.
\end{definition}

With these models in mind, we can now define a relative trisection.

\begin{definition}
	Let $X$ be a smooth, oriented, and connected 4-manifold with connected non-empty boundary. A relative trisection $\mathcal{T}$ of $X$ is a a decomposition $X=X_1\cup X_2 \cup X_3$ such that:
  \begin{itemize}
    \item There are diffeomorphisms $\phi_i:X_i\to Z$ such that $\phi_i(X_i\cap \partial X)=\partial_{\textnormal{out}}Z$,
    \item For each $i$, $\phi_i(X_i\cap X_{i-1})=Y^-$ and $\phi_i(X_i\cap X_{i+1})=Y^+$.
  \end{itemize}
\end{definition}

The advantage of this structure is that it naturally induces an open book on $\partial X$ with binding $L=\partial (X_1\cap X_2\cap X_3)$, for which the surfaces $X_i\cap X_j\cap \partial X$ are pages. Indeed, by construction $\partial X\setminus \nu(L)$ fibers over $S^1$, with fiber diffeomorphic to $\partial_+C$. In particular, the binding is a $|\partial \Sigma|$-component link in $\partial X$, and the pages have genus $g(\partial_+C)$. For more details, the reader is encouraged to consult \cite{CasGayPin18a} and \cite{GayKir16}. If $\partial X$ is connected, then the pages of this open book decomposition are also necessarily connected; this key observation is required to compute relative trisections of surface complements in \cite{KimMil1808}.

Analogous to the closed case, the following fundamental result allows us to study 4-manifolds with boundary via relative trisections.

\begin{theorem*}[\hspace{1sp}\cite{GayKir16}]
Let $X$ be a smooth, oriented, and connected 4-manifold with connected non-empty boundary, and fix an open book decomposition of $\partial X$. Then there is a relative trisection of $X$ inducing this open book decomposition. Any two relative trisections for $X$ inducing isotopic open books on $\partial X$ become isotopic after sufficiently many interior stabilizations.
\end{theorem*}

There are also moves relating relative trisections inducing \emph{different} open book decompositions, but we will not discuss them in this paper. See \cite{RelLInv} and \cite{Cas15} for more details. A key feature of relative trisections is that two such decompositions can be glued together to form a closed (trisected) 4-manifold. The following gluing theorem was originally proved by Castro in his thesis \cite{Cas15}.

\begin{theorem*}[\hspace{1sp}\cite{Cas15}]
Let $\mathcal{T}$ and $\mathcal{T}'$ be trisections of 4-manifolds $X$ and $X'$, respectively. Denote the open book decompositions induced on $\partial X$ and $\partial X'$ by $\mathcal{O}$ and $\mathcal{O}'$, respectively. Suppose that there is a diffeomorphism $f:\partial X\to \partial X'$, and that $f(\mathcal{O})$ is isotopic to $\mathcal{O}'$. Then there is a naturally induced trisection $\mathcal{T}\cup \mathcal{T}'$ of $X\cup_f X'$.
\end{theorem*}

The main advantage of using such specific decompositions of $\partial_{in}Z$ is that one can define relative trisection diagrams.

\begin{definition}
A $(g,k;p,b)$-relative trisection diagram is a tuple $(\Sigma;\alpha,\beta,\gamma)$, where $\Sigma$ is a connected surface with non-empty boundary, $\alpha$, $\beta$, $\gamma$ are collections of disjoint embedded curves, and $(\Sigma;\alpha,\beta)$, $(\Sigma;\beta,\gamma)$, $(\Sigma;\gamma,\alpha)$ are \emph{slide-standard}, i.e. diffeomorphic to the diagram in Figure \ref{fig:relstandard} after handle slides.
\end{definition}

\begin{figure}[ht]
\centering
\includegraphics[width=.9\linewidth]{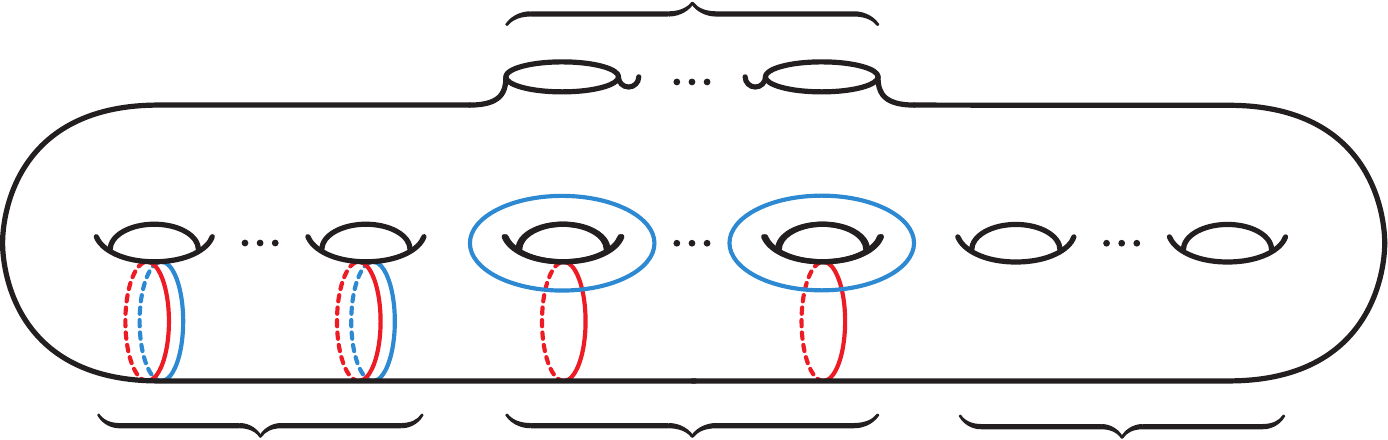}
\put(-294,-12){$k-2p-b+1$}
\put(-199,-12){$g+p+b-k-1$}
\put(-64,-12){$p$}
\put(-223,110){$b>0$ boundary components}
\caption{A standard set of curves for a $(g,k;p,b)$-trisection diagram. The surface has genus $g$ and $b>0$ boundary components, and the result of compressing either set of curves has genus $p$. The integer $k$ records the genus of the 4-dimensional handlebodies of the relative trisection.}
\label{fig:relstandard}
\end{figure}

As in the closed case, a relative trisection diagram $\mathfrak{D}$ determines a trisected 4-manifold, and up to stabilization, two relative trisection diagrams inducing the same open book decomposition are related by a sequence of isotopies and handle slides of curves, and surface automorphisms. In \cite{CasGayPin18a}, the authors also show how to compute the abstract monodromy of the open book decomposition induced by a relative trisection diagram. We will summarize the algorithm here, but refer the reader to \cite{CasGayPin18a} for more details. This algorithm begins by standardizing the $\alpha$ and $\beta$ curves, but this is not strictly necessary; one can state a version of this algorithm which does not require this.

\begin{algorithm}[Monodromy Algorithm]\label{alg:monodromy}
Suppose that $\mathfrak{D}=(\Sigma;\alpha,\beta,\gamma)$ is a relative trisection diagram for $X$. We will denote the result of compressing $\Sigma$ along the $\alpha$ curves by $\Sigma_\alpha$. This is diffeomorphic to a page for the open book decomposition on $\partial X$, and the monodromy will be described as an automorphism of $\Sigma_\alpha$.

\begin{itemize}
\item[(1)] Standardize the $\alpha$ and $\beta$ curves, i.e. perform handle slides so that they look like the curves in Figure \ref{fig:relstandard} (this is often already the case). Let $a$ be a collection of disjoint properly embedded arcs that are disjoint from $\alpha$ and $\beta$, such that the result of compressing $\Sigma\setminus a$ is a disk.
\item[(2)] Do slides of $\beta$ and $\gamma$ curves and slide $a$ over $\beta$ until $a$ is transformed into a collection of arcs, $c$, disjoint from $\gamma$. 
\item[(3)] Let $\alpha'$ be another copy of the $\alpha$ curves. Do slides of the $\gamma$ and $\alpha$ curves and slides of $c$ over $\alpha$, until $c$ is transformed into a new collection of arcs, $a'$, which are disjoint from $\alpha'$.
\item[(4)] Perform slides of $\alpha$ and $a'$ over $\alpha$ until $\alpha'=\alpha$ (in practice, this is often already the case), and $a'$ is another set of arcs disjoint from $\alpha$. The required automorphism $\phi$ of $\Sigma_\alpha$ is now described by $\phi(a)=a'$.
\end{itemize}
\end{algorithm}

There are many choices appearing in this algorithm, but the work of \cite{CasGayPin18a} shows that the monodromy is independent of these choices.

\begin{definition}
Suppose that $\mathfrak{D}=(\Sigma;\alpha,\beta,\gamma)$ is a relative trisection diagram. An \emph{arced relative trisection diagram} is a diagram $(\Sigma;\alpha,\beta,\gamma;a,b,c)$, where $a$ and $c$ are a choice of arcs in $\Sigma$ appearing in Algorithm \ref{alg:monodromy} and $b$ is another copy of $a$.
\end{definition}

\begin{example}
Figure \ref{TrisectionDiagramB4} illustrates a relative trisection diagram for $B^4$. There are two boundary components, and the induced open book decomposition on $S^3$ has annular pages. Using Algorithm \ref{alg:monodromy}, one can compute the induced abstract monodromy of this open book decomposition.

\begin{figure}[ht]
  \centering
  \includegraphics[height=.3\linewidth]{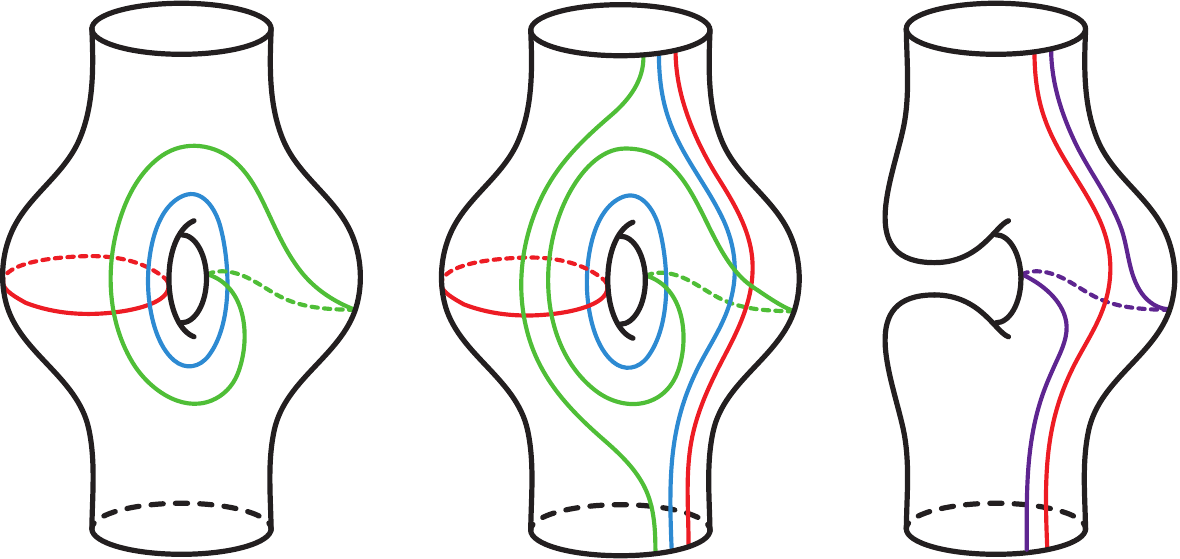}
  \put(-198,-20){(a)}
  \put(-113,-20){(b)}
  \put(-40,-20){(c)}
  \captionof{figure}{In (a), a $(1,1;0,2)$-relative trisection for $B^4$. In (b), an \textit{arced} relative trisection for $B^4$. In (c), the result of applying the monodromy algorithm: an arc and its image in $\Sigma_\alpha$ under the induced monodromy. The open book decomposition induced on $S^3$ has annular pages and monodromy given by a single left handed Dehn twist.}
	\label{TrisectionDiagramB4}
\end{figure}
\end{example}

In combination with the gluing theorem of Castro, the monodromy algorithm can be used to glue relative trisection diagrams. Indeed, suppose that $\mathfrak{D}=(\Sigma;\alpha,\beta,\gamma)$ and $\mathfrak{D}'=(\Sigma';\alpha',\beta',\gamma')$ are two relative trisection diagrams for $X$ and $X'$, and that the induced open books $\mathcal{O}$ and $\mathcal{O}'$ on $\partial X$ and $\partial X'$ are diffeomorphic.
Moreover, assume that $f:\partial X\to \partial X'$ is a diffeomorphism that respects the open books $\mathcal{O}$ and $\mathcal{O}'$.
First, choose a cut system of arcs $a$ for $\Sigma_\alpha$, and use Algorithm \ref{alg:monodromy} to obtain an arced relative trisection diagram $(\Sigma;\alpha,\beta,\gamma;a,b,c)$ for $X$. The image $a'=f(a)$ of $a$ is a cut system of arcs for $\Sigma'_{\alpha'}$, and we can use Algorithm \ref{alg:monodromy} to complete this to an arced relative trisection diagram $(\Sigma';\alpha',\beta',\gamma';a',b',c')$ for $X'$.
Then a relative trisection diagram for $X\cup_fX'$ is given by $(\Sigma\cup_f \Sigma',\alpha'',\beta'',\gamma'')$, where:
\begin{align*}
\alpha''&=\alpha\cup \alpha'\cup (a\cup a') \\
\beta''&=\beta\cup \beta'\cup (b\cup b') \\
\gamma''&=\gamma\cup \gamma'\cup (c\cup c')
\end{align*}

Here, we use the gluing map $f$ to identify the boundary components of $\Sigma$ and $\Sigma'$. In some cases, (when the open book decomposition has annular pages) this process is straightforward. For more details, the reader is encouraged to consult \cite{CasGayPin18a} or \cite{GayMei1806}.

%%%%%%%%%%%%%%%%%%%%%%%%%%%%%%%%%%%%%%%%%%%%%%%%%%%%%%%%
\subsection{Bridge trisections of surfaces}
%%%%%%%%%%%%%%%%%%%%%%%%%%%%%%%%%%%%%%%%%%%%%%%%%%%%%%%%

In \cite{MeiZup17} and \cite{MeiZup18}, Meier and Zupan generalized bridge splittings of knots in $S^3$ to knotted surfaces in 4-manifolds.

\begin{definition}[\hspace{1sp}\cite{MeiZup18}]
	Suppose that a closed 4-manifold $X$ has a $(g;k_1,k_2,k_3)$-trisection $\mathcal{T}$, with sectors $X_1$, $X_2$, and $X_3$. An embedded surface $S\subset X$ is in \emph{bridge position} with respect to $\mathcal{T}$ if:
	\begin{itemize}
		\item $S\cap X_i=\mathcal{D}_i$ is a trivial $c_i$-disk system,
		\item $S\cap (X_i\cap X_j)=\tau_{ij}$ is a trivial $b$-tangle,
    \item $S\cap (X_1\cap X_2\cap X_3)$ is a collection of $2b$ points.
	\end{itemize}

	\noindent Here, a trivial $c_i$-disk system is a collection of $c_i$ properly embedded and boundary parallel disks in $X_i$, and a trivial $b$-tangle is a collection of $b$ properly embedded and boundary parallel arcs in $X_i\cap X_j$. The surface $S$ is said to be in $(b;c_1,c_2,c_3)$-\emph{bridge trisected position} with respect to $\mathcal{T}$. If $c_1=c_2=c_3=c$, the bridge trisection is called balanced, and we will refer to this as a $(g,k;b,c)$-bridge trisection.
\end{definition}

Note that since each $\mathcal{D}_i$ is boundary parallel, the union of any pair of tangles is necessarily an unlink. In fact, the unlink bounds a unique collection of boundary parallel disks in $\natural^kS^1\times B^3$ up to isotopy (rel boundary), and so a bridge trisection is completely determined by the union $\tau_{12}\cup\tau_{23}\cup\tau_{31}$.

In \cite{MeiZup18}, Meier and Zupan show that if $\mathcal{T}$ is a trisection of a 4-manifold $X$ and $S\subset X$ is an embedded surface, then $S$ can be isotoped to lie in bridge trisected position with respect to $\mathcal{T}$. Analogous to the natural stabilization operation for bridge splittings of knots in $S^3$, there is a stabilization operation for bridge trisections with respect to a fixed trisection \cite{MeiZup17}. Hughes, Kim, and Miller \cite{HugSeuMil19} have shown that any two bridge trisections for $S\subset X$ can be made isotopic after some number of stabilizations. We will not need this stabilization operation in this paper, and so refer the reader to \cite{MeiZup17} for more details.

If the 4-manifold in question is $S^4$ together with the $(0;0)$-trisection, then a \emph{tri-plane diagram} is a depiction of each $\tau_{ij}\subset S^3$. Meier and Zupan give a complete calculus of moves that can be used to pass between any two tri-plane diagrams of the same surface in $S^4$. An example of a $(4;2)$-tri-plane diagram is illustrated in Figure \ref{TriplaneDiagramSpunTrefoil}. 

In general, we cannot easily draw diagrams of tangles in $\#^kS^1\times S^2$. Instead, we draw projections of $\tau_{ij}$ on the central surface $\Sigma$ for $\mathcal{T}$ (since $\tau_{ij}$ are boundary parallel). These are called \emph{shadows} for the bridge trisection, which we will denote by $s_{ij}$. Note that for any choice of $s_{ij}$, the union $s_{ij}\cup\tau_{ij}$ bounds a disk in $X_i\cap X_j$. While there may be many different choices of shadows (and corresponding disks in $X_i\cap X_j$), any two choices of shadows for $\tau_{ij}$ are related by \emph{disk slides}.
These may be realized in $\Sigma$ by sliding one shadow over another. A \emph{shadow diagram} of the trivial tangles in Figure \ref{TriplaneDiagramSpunTrefoil} is illustrated in Figure \ref{ShadowDiagramSpunTrefoil}. 

\begin{example}\label{rp2triplane}
	We illustrate a bridge trisection for the spun trefoil $S\subset S^4$. With respect to the trivial trisection of $S^4$, $S$ can be described by the triplane diagram in Figure \ref{TriplaneDiagramSpunTrefoil}. It is also described by the shadow diagram in Figure \ref{ShadowDiagramSpunTrefoil}.

	\begin{figure}[ht]
		\centering
		\includegraphics[width=\linewidth]{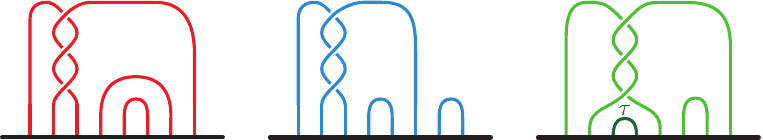}
		\caption{A $(4;2)$-triplane diagram for $S$ consisting of three tangle diagrams in $B^3$. Any tangle together with the mirror of any other tangle gives a 2-component unlink. The arc $\tau$ will be used for meridional stabilization in Figure \ref{TriplaneDiagramSpunTrefoilStabilized}.}%
		\label{TriplaneDiagramSpunTrefoil}
	\end{figure}

	\begin{figure}[ht]
		\centering
		\includegraphics[width=\linewidth]{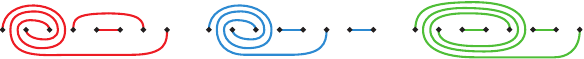}
		\caption{An alternate diagram for the bridge trisection of $S$ in Figure \ref{TriplaneDiagramSpunTrefoil}, consisting of three shadow diagrams in $S^2$. Different choices of shadows for the tangles in Figure \ref{TriplaneDiagramSpunTrefoil} are related by slides of the shadows among each other.}%
		\label{ShadowDiagramSpunTrefoil}
	\end{figure}

\end{example}

It is sometimes desirable to arrange the bridge trisection of $S\subset X$ to have a small number of bridge points. If $S$ is connected, one can \emph{meridionally stabilize} the trisection; this decreases the number of bridge points by modifying the trisection of $X$ in a way that increases the trisection genus.

\begin{definition}
Suppose that a closed 4-manifold $X$ has a $(g;k_1,k_2,k_3)$-trisection $\mathcal{T}$, and that a connected surface $S\subset X$ is in $(b;c_1,c_2,c_3)$-bridge position. Suppose that $c_1\geq 2$. Then there is an arc $t\in \tau_{23}$ connecting different components of $\mathcal{D}_{1}$, and we can define a new trisection $\mathcal{T}'$ of $X$ by:
\begin{itemize}
\item $X_1'=X_1\cup \overline{\nu(\tau)}$;
\item $X_2'=X_2\cup \setminus \nu(\tau)$;
\item $X_3'=X_3\cup \setminus \nu(\tau)$,
\end{itemize}
and a new bridge trisection of $S$ with respect to $\mathcal{T}'$ by:
\begin{itemize}
\item $\mathcal{D}_1'=\mathcal{D}_1\cup (S\cap \overline{\nu(\tau)})$;
\item $\mathcal{D}_2'=\mathcal{D}_2\setminus \nu(\tau))$;
\item $\mathcal{D}_3'=\mathcal{D}_3\setminus \nu(\tau))$;
\end{itemize}
The trisection $\mathcal{T}'$ is called a \emph{meridional 1-stabilization} of $\mathcal{T}$. Meridional 2- and 3-stabilization are defined similarly.
\end{definition}

Meier and Zupan show \cite[Lemma 22]{MeiZup18} that $\mathcal{T}'$ is indeed a $(g+1;k_1+1,k_2,k_3)$-trisection for $X$, and that $S$ is in $(b-1;c_1-1,c_2,c_3)$-bridge position with respect to $\mathcal{T}'$. In particular, by repeated meridional stabilization, one can arrange for a connected surface $S\subset X$ to be in $(b;1)$-bridge trisected position with respect to some trisection of $X$, and for some $b\geq 1$. Note that if $S$ is in $(b;c)$-bridge position then $\chi(S)=3c-b$, and so an embedded 2-sphere can always be put in $(1;1)$-bridge position with respect to some trisection.

\begin{example}
To meridionally stabilize the bridge trisection of the spun trefoil $S$ in Figure \ref{ShadowDiagramSpunTrefoil}, note that the arc $\tau$ connects the two components bounded by the union of the red and blue tangles. The stabilization adds the annulus $\partial\nu(\tau)\cap (X_1\cap X_2)$ to the central surface $\Sigma$. In the schematic, the two open circles in each image are identified to form a torus. The surface $S$ now intersects $\Sigma$ in only 6 points, and has the illustrated shadows.

\begin{figure}[ht] 
	\centering
	\includegraphics[width=\linewidth]{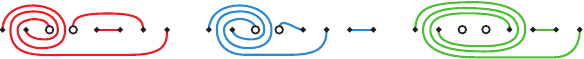}
	\caption{Meridionally stabilizing the bridge trisection in Figure \ref{ShadowDiagramSpunTrefoil} once gives the following schematic, which records a $(3;1,2,2)$-bridge trisection of $S$ with respect to a $(1;1,0,0)$-trisection of $S^4$.}%
	\label{TriplaneDiagramSpunTrefoilStabilized}
\end{figure}
\end{example}

For more exposition on bridge trisections and the various stabilization operations, as well as many more examples, see \cite{MeiZup18}.

%%%%%%%%%%%%%%%%%%%%%%%%%%%%%%%%%%%%%%%%%%%%%%%%%%%%%%%%
%%%%%%%%%%%%%%%%%%%%%%%%%%%%%%%%%%%%%%%%%%%%%%%%%%%%%%%%
\section{Trisection Diagrams of Surface Complements}\label{sec:surfacecomplements}
%%%%%%%%%%%%%%%%%%%%%%%%%%%%%%%%%%%%%%%%%%%%%%%%%%%%%%%%
%%%%%%%%%%%%%%%%%%%%%%%%%%%%%%%%%%%%%%%%%%%%%%%%%%%%%%%%

In this section, we summarize some recent results on relative trisection diagrams of complements of surfaces in 4-manifolds. Suppose that $X$ is a 4-manifold equipped with a trisection $\mathcal{T}$ and that $S\subset X$ is a connected embedded surface. By \cite{MeiZup18}, one can isotope $S$ to be in bridge position with respect to $\mathcal{T}$. In fact, one can meridionally stabilize $\mathcal{T}$ until $S$ is in $(3-\chi(S);1)$-bridge position with respect to some (stabilized) trisection, which we will continue to denote by $\mathcal{T}$. One might hope that a relative trisection of $X\setminus \nu(S)$ could be obtained by simply deleting a regular neighbourhood of $S$ from each sector of $\mathcal{T}$. In fact, if $S$ is a sphere in 1-bridge position, then a relative trisection of $X\setminus\nu(S)$ can be obtained in this way.
If $S$ is not a sphere, then this procedure \emph{never} produces a relative trisection of $X\setminus\nu(S)$. Indeed, relative trisections are required to induce an open book decomposition on $\partial X$ for which $X_i\cap X_j\cap \partial X$ are (connected) pages. In general, if $S$ is in $(b;1)$-bridge position then $(X_i\setminus\nu(S))\cap (X_j\setminus \nu(S))\cap \partial (X\setminus \nu(S))=\sqcup_b S^1\times I$, which is disconnected unless $S$ is in 1-bridge position (and $S$ is a sphere). However, this decomposition of $X\setminus\nu(S)$ can be improved to a trisection using the boundary stabilization technique developed in \cite{KimMil1808}.

\begin{definition}[\hspace{1sp}\cite{KimMil1808}]
Let $X$ be a smooth, oriented, closed, and connected 4-manifold with connected non-empty boundary, and suppose that $X=X_1\cup X_2\cup X_3$ where $\textnormal{int}(X_i)\cap \textnormal{int}(X_j)=\emptyset$. Let $c$ be an arc in $X_i\cap X_j\cap \partial X$, and let $\nu(c)$ be a fixed open tubular neighbourhood of $c$. Define:
	\begin{itemize}
		\item $X_i'=X_i\setminus \nu(c)$;
		\item $X_j'=X_j\setminus \nu(c)$;
		\item $X_k'=X_k\cup \nu(c)$.
	\end{itemize}
	The replacement $(X_1,X_2,X_3)\to (X_1',X_2',X_3')$ is called a \emph{boundary stabilization}.
\end{definition}

Kim and Miller show that in general, a relative trisection for $X\setminus \nu(S)$ can be obtained by deleting a tubular neighbourhood of $S$ from each sector, and then boundary stabilizing the resulting decomposition sufficiently many times (thus connecting the components of $\partial X\cap X_i\cap X_j$). For more details, we refer the reader to \cite{KimMil1808}. As a brief example, we will consider the case of projective planes embedded in $S^4$.

\begin{definition}
Let $M_\pm\subset S^3\subset S^4$ be the standard M\"obius band with either a positive or negative half twist. Pushing the boundary of $M_\pm$ into the upper hemisphere of $S^4$ and capping the resulting unlink with a disk gives an embedded projective plane $P_\pm\subset S^4$, which we will refer to as \emph{unknotted}. These two embeddings are distinguished up to isotopy by their normal Euler numbers, since $e(P_\pm)=\pm2$.
\end{definition}

A triplane diagram of the unknotted projective plane $P_+\subset S^4$ is given in Figure \ref{TriplaneDiagramPMinus}. In fact, $S^4\setminus \nu(P_+)\cong \nu(P_-)$, and so $S^4=\nu(P_+)\cup \nu(P_-)$ \cite{KatSaeTerYam99}. After boundary stabilizations, Kim and Miller obtain the relative trisection diagram for $S^4\setminus \nu(P_-)$ given in Figure \ref{TrisectionDiagramRP2ext}. One can also verify that this diagram is correct using the usual algorithm to extract a Kirby diagram from a trisection diagram. The mirror image of this diagram is a diagram for $\nu(P_-)$.

\begin{figure}[ht]
	\centering
	\includegraphics[width=\linewidth]{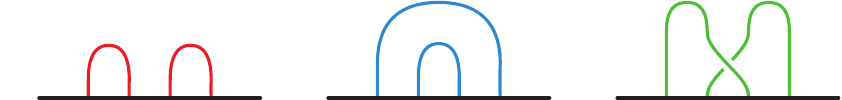}
	\caption{A triplane diagram describing a $(2,1)$-bridge trisection of $P_-$ with respect to the $(0;0)$-trisection of $S^4$.}%
	\label{TriplaneDiagramPMinus}
\end{figure}

\begin{figure}[ht]
	\centering
	\includegraphics[height=0.5\linewidth]{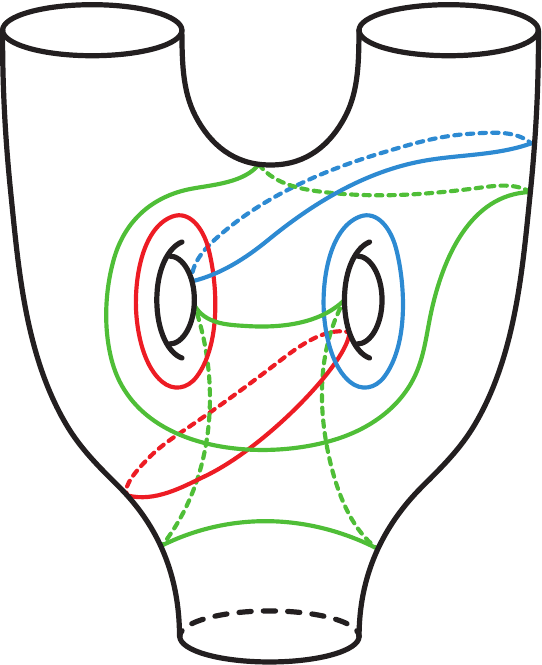}
	\caption{A $(2,2;0,3)$-relative trisection diagram of $S^4\setminus \nu(P_-)$ obtained by starting with the bridge trisection of $P_-$ in Figure \ref{TriplaneDiagramPMinus} above, removing $\nu(P_-)$ from $S^4$, and performing boundary stabilizations.}%
	\label{TrisectionDiagramRP2ext}
\end{figure}

Gay and Meier \cite{GayMei1806} studied the special case of surgery along 2-spheres in detail. Suppose that $S$ is a 2-sphere embedded in a trisected 4-manifold $X$ with trivial normal bundle, and that $S$ is in $(1;1)$-bridge position. Then $X\setminus \nu(S)$ inherits a natural trisection which induces an open book decomposition on $\partial(X\setminus \nu(S))\cong S^1\times S^2$ with annular pages (and identity monodromy). In general, a relative trisection diagram is called \emph{$p$-annular} if the pages of the induced open book are annuli, and the induced monodromy is given by $p$ Dehn twists about the core of the annulus. In particular, the boundary of the described 4-manifold is the lens space $L(p,1)$. Gay and Meier show that if $\mathcal{T}$ and $\mathcal{T}'$ are $p$-annular relative trisections of $X$ and $X'$, then there is a unique way to glue the associated trisections together. Moreover, given relative trisection diagrams $\mathfrak{D}$ and $\mathfrak{D}'$ for $\mathcal{T}$ and $\mathcal{T}'$, the resulting diagram is \emph{independent} of the choices of arcs extending $\mathfrak{D}$ and $\mathfrak{D}'$ to arced relative trisection diagrams.

Consequently, one way to produce a trisection diagram for the Gluck twist $\Sigma_S(X)$ of $X$ along $S$ is to glue a 0-annular relative trisection diagram for $S^2\times D^2$ to a 0-annular relative trisection diagram for $X\setminus \nu(S)$ (via the twist map $\tau:S^2\times S^1\to S^2\times S^1$ from Section \ref{sec:intro}). Theorem C of \cite{GayMei1806}  provides a short cut-and-paste method to produce such a diagram.

\begin{theorem*}[\hspace{1sp}\cite{GayMei1806}]
Let $X$ be a 4-manifold and suppose $S\subset X$ is an embedded 2-sphere. Suppose that $\mathfrak{D}^0$ is an arced trisection diagram for the complement $X\setminus \nu(S)$. Then the result of Gluck surgery along $S$ in $X$ is described by the trisection diagram $\mathfrak{D}^0\cup \mathfrak{D}_\mathfrak{a}$, as in Figure \ref{GayMei1806TheoremC}.
\end{theorem*}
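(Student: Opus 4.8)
The plan is to exhibit both $X$ and $\Sigma_K(X)$ as the result of gluing the same two pieces — the exterior $E = X \setminus \nu(K)$ and the neighbourhood $\nu(K)$ — differing only in the boundary identification, and then to realize that gluing diagrammatically using the arced-diagram gluing procedure of \cite{CasGayPin18a}. Since $K$ is a 2-knot, its normal bundle is trivial, so $\nu(K) \cong S^2 \times D^2$ with $\partial \nu(K) \cong S^2 \times S^1$, and by definition
\[
X = E \cup_{\mathrm{id}} (S^2 \times D^2), \qquad \Sigma_K(X) = E \cup_\tau (S^2 \times D^2),
\]
where $\tau \colon S^2 \times S^1 \to S^2 \times S^1$ is the Gluck automorphism. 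The two closed manifolds are thus assembled from identical pieces, and the only thing to track is how $\tau$, as opposed to the identity, is detected on the trisection side.

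First I would fix a relative trisection of $S^2 \times D^2$ and record its arced diagram. Because $K$ may be taken in $1$-bridge position, the neighbourhood sectors $X_i \cap \nu(K)$ are standard (each a disk times the normal $D^2$ direction), so no boundary stabilization is needed and $S^2 \times D^2$ admits a small-genus relative trisection whose induced open book on $S^2 \times S^1$ agrees with the one carried by $\mathfrak{D}^0$ on the same boundary. I would then define $\mathfrak{D}_\mathfrak{a}$ to be this neighbourhood diagram equipped with the arc system $\mathfrak{a}$ that matches, after orientation reversal, the arcs of $\mathfrak{D}^0$, but joined through the Gluck automorphism rather than the identity.

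With both diagrams in hand, the gluing theorem of \cite{CasGayPin18a} applies: since $\mathfrak{D}^0$ and $\mathfrak{D}_\mathfrak{a}$ induce orientation-reversing diffeomorphic open books on $S^2 \times S^1$, the combined object $\mathfrak{D}^0 \cup \mathfrak{D}_\mathfrak{a}$ — obtained by amalgamating the two trisection surfaces and joining their arcs into closed curves — is a genuine closed trisection diagram of $E \cup_\phi (S^2 \times D^2)$, where $\phi$ is the boundary gluing determined by the arc-matching. It then remains to identify $\phi$. The key computation is to show that the arc system $\mathfrak{a}$ implements exactly the Gluck twist: writing $\tau$ as the image of the generator of $\pi_1(SO(3)) = \mathbb{Z}/2$ under the fiberwise rotation $S^1 \to SO(3)$, I would track its action on the open book page and verify that the prescribed twisted arc-joining reproduces this mapping-class modification. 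Because the Gluck automorphism is the only nontrivial candidate up to isotopy among the relevant elements of $\pi_0(\mathrm{Diff}(S^2 \times S^1))$, it suffices to distinguish $\phi = \tau$ from $\phi = \mathrm{id}$, the latter of which would instead recover the diagram for $X$.

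I expect the main obstacle to be precisely this last identification — verifying diagrammatically that joining the arcs of $\mathfrak{D}^0$ to those of $\mathfrak{D}_\mathfrak{a}$ realizes $\tau$ and not the identity. This is the only step where the specific geometry of the Gluck twist enters, and it requires carefully translating the fiberwise rotation of $S^2$ around the $S^1$ factor into the arc-and-curve language of the glued diagram, confirming that the resulting monodromy differs from the one yielding $X$ by exactly the Gluck mapping class.
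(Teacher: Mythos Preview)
This theorem is quoted in the paper from \cite{GayMei1806} (their Theorem~C, Part~2) without proof, so there is no in-paper argument to compare against. That said, your proposal contains a genuine misconception about what $\mathfrak{D}_\mathfrak{a}$ is. You treat it as an arced relative trisection diagram of the neighbourhood $\nu(K)\cong S^2\times D^2$, to which the gluing theorem of \cite{CasGayPin18a} would then apply. But as the paper explicitly notes immediately after stating the theorem, $\mathfrak{D}_\mathfrak{a}$ is \emph{not} a trisection diagram of any 4-manifold: it is an annular diagram---drawn on an annulus, not on the central surface of a relative trisection of $S^2\times D^2$---carrying only three arcs (parallel $b$ and $c$ arcs, and an $a$ arc differing from them by a single positive Dehn twist along the core) and no closed curves at all.

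Because $\mathfrak{D}_\mathfrak{a}$ is not a genuine arced relative trisection diagram, the Castro--Gay--Pinz\'on-Caicedo gluing theorem does not apply to it as stated, and your plan to invoke that theorem directly collapses. The actual mechanism in \cite{GayMei1806} is more direct: once $K$ is in $1$-bridge position, removing $\nu(K)$ deletes a neighbourhood of the bridge point from the trisection surface, leaving two annular boundary components; regluing $S^2\times D^2$ via the Gluck map $\tau$ is then identified---at the level of the induced open book on $S^2\times S^1$---with inserting a single Dehn twist into one family of arcs as they cross this annulus. That twist is precisely what the annular piece $\mathfrak{D}_\mathfrak{a}$ records. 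Your outline is in the right spirit (cap off the exterior and track how $\tau$ acts on the open book), but the object you propose to attach is the wrong one, and the gluing theorem you cite does not justify the operation $\mathfrak{D}^0\cup\mathfrak{D}_\mathfrak{a}$ as you have set it up.
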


The content of this theorem is illustrated in Figure \ref{GayMei1806TheoremC}. Here, $\mathfrak{D}_\mathfrak{a}$ is the annular diagram consisting of two parallel $b,c$ arcs and an $a$ arc that differs by a positive Dehn twist. It is important to note that $\mathfrak{D}_\mathfrak{a}$ is \emph{not} a relative trisection diagram for $S^2\times D^2$, but features as though it is. It arises as the result of destabilizing the diagram obtained by gluing $\mathfrak{D}^0$ to an honest relative trisection diagram for $S^2\times D^2$. The result is also true if we replace $\mathcal{D}_\mathfrak{a}$ with the analogous diagrams $\mathcal{D}_\mathfrak{b}$ or $\mathcal{D}_\mathfrak{c}$, or their mirrors (Remark 5.6 \cite{GayMei1806}).

\begin{figure}[ht] %fix
	\centering
	\includegraphics[width=0.7\linewidth]{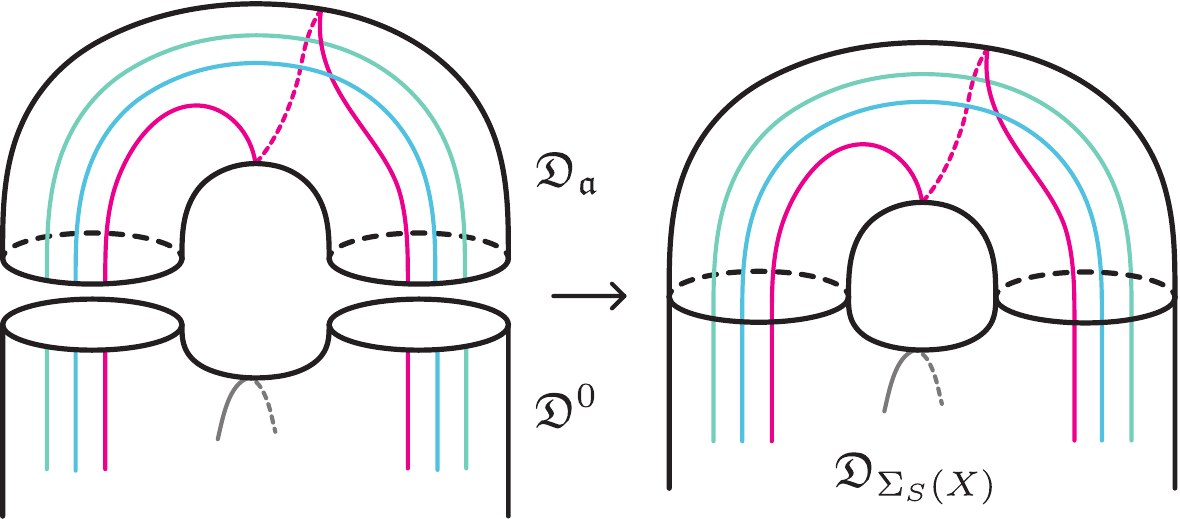}
	\caption{Performing Gluck surgery by gluing diagrams.}%
	\label{GayMei1806TheoremC}
\end{figure}

Here and in the next section, we will draw a grey arc in a diagram (e.g. $\mathfrak{D}^0$ in Figure \ref{GayMei1806TheoremC}) to indicate that this portion of the diagram may contain many curves of arbitrary colors. For clarity, we will also color arcs in a trisection diagram lighter than closed curves.

%%%%%%%%%%%%%%%%%%%%%%%%%%%%%%%%%%%%%%%%%%%%%%%%%%%%%%%%
%%%%%%%%%%%%%%%%%%%%%%%%%%%%%%%%%%%%%%%%%%%%%%%%%%%%%%%%
\section{Diagramatic Proof}\label{sec:proof}
%%%%%%%%%%%%%%%%%%%%%%%%%%%%%%%%%%%%%%%%%%%%%%%%%%%%%%%%
%%%%%%%%%%%%%%%%%%%%%%%%%%%%%%%%%%%%%%%%%%%%%%%%%%%%%%%%

%%%%%%%%%%%%%%%%%%%%%%%%%%%%%%%%%%%%%%%%%%%%%%%%%%%%%%%%
\subsection{Reducing to diagrams}
%%%%%%%%%%%%%%%%%%%%%%%%%%%%%%%%%%%%%%%%%%%%%%%%%%%%%%%%

We will now give a new proof of Theorem \ref{kstythm}. We will begin by precisely formulating a diagrammatic statement that implies the result, and then carry out a proof using these diagrams.

\begin{proposition}
Let $X$ be a smooth, oriented, closed and connected 4-manifold, and let $S\subset X$ be an embedded 2-sphere with trivial normal bundle. Let $P_\pm\subset X$ be an unknotted projective plane. Then the manifolds $\Sigma_S(X)$ and $\Pi_{S\# P_{\pm}}(X)$ are diffeomorphic if the portion of the trisection diagram illustrated in Figure \ref{DiagramStartingPoint} can be converted (through a sequence of handle slides, isotopy of curves, surface diffeomorphisms and destabilizations) to one of $\mathfrak{D}_\mathfrak{a}$,$\mathfrak{D}_\mathfrak{b}$, or $\mathfrak{D}_\mathfrak{c}$.
\end{proposition}

\begin{figure}[ht]
  \centering
  \includegraphics[width=0.95\textwidth]{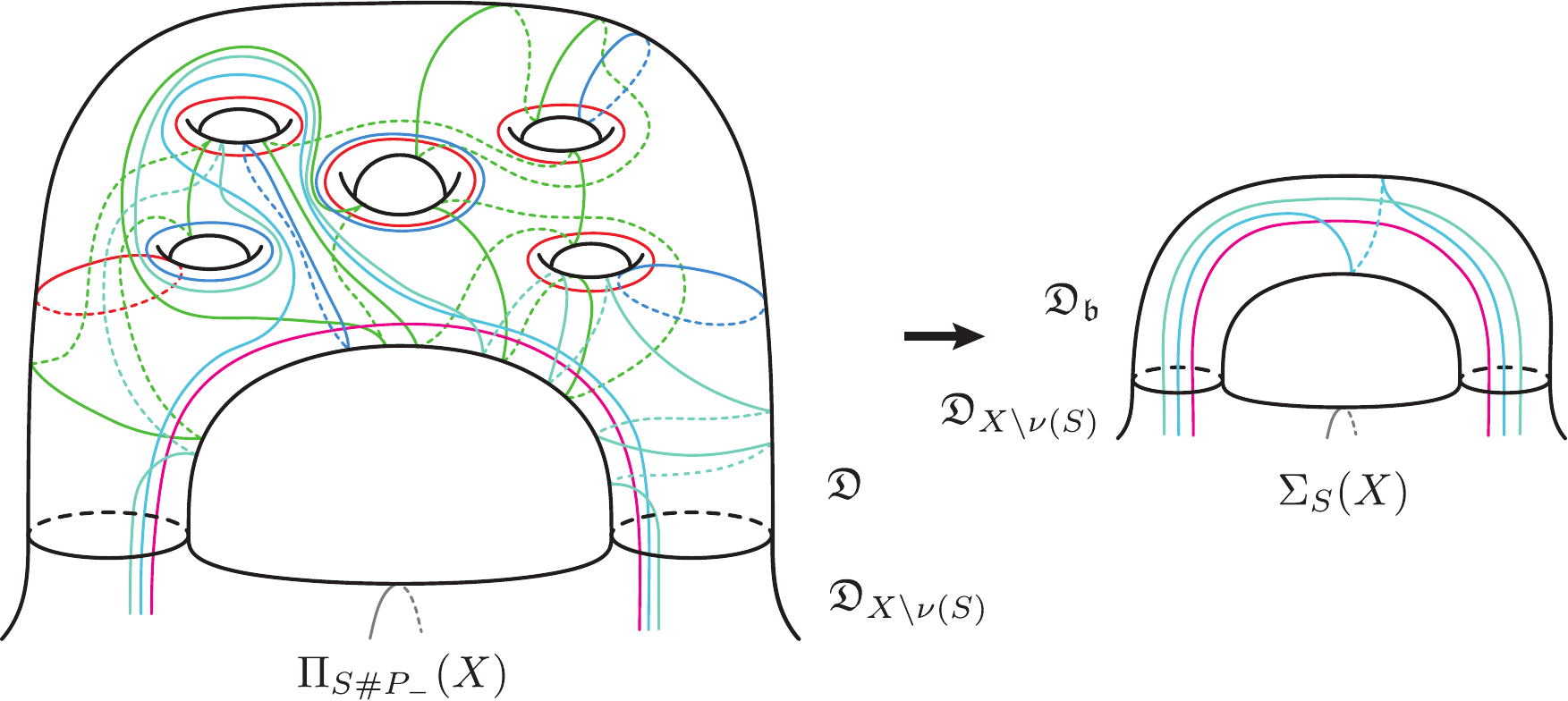}
  \caption{A portion of a trisection diagram for $\Pi_{S\#P_\pm}$.}%
	\label{DiagramStartingPoint}
\end{figure}

\begin{remark}
The large diagram on the left of Figure \ref{DiagramStartingPoint} is only \emph{part} of a larger trisection diagram for a closed 4-manifold. Such a diagram is not necessarily guaranteed to be an honest relative trisection diagram, even though the illustration is suggestive. Similarly, $\mathfrak{D}_\mathfrak{b}$ (on the right of Figure \ref{DiagramStartingPoint}) is not a relative trisection diagram. 
\end{remark}

\begin{proof}
	Let $\mathcal{T}$ be a trisection of $X$. By \cite{MeiZup18}, the 2-sphere $S$ can be isotoped to lie in bridge position with respect to $\mathcal{T}$. Furthermore, by repeated meridional stabilization, $S$ can be assumed to be in $(1;1)$-bridge position with respect to a stabilization of $\mathcal{T}$ (which we will continue to denote by $\mathcal{T}$). Consequently, $X\setminus\nu(S)$ inherits a natural $0$-annular relative trisection, i.e., the induced open book decomposition of $\partial (X\setminus \nu(S))=S^2\times S^1$ has annular pages and trivial monodromy. Let $\mathfrak{D}_{X\setminus \nu(S)}$ be a relative trisection diagram describing this relative trisection of $X\setminus \nu(S)$.
	
	Now, let $P_\pm\subset X$ be an unknotted projective plane with Euler number $\pm 2$. By the gluing results in \cite[Section 5]{KimMil1808}, a relative trisection diagram for $X\setminus \nu(S\#P_\pm)$ can be obtained as $\mathfrak{D}_{X\setminus \nu(S)}\cup \mathfrak{D}_{S^4\setminus \nu(P_\pm)}$. This is illustrated in the schematic in Figure \ref{schematic} below, for the case of $P_-$. For clarity, the arcs in these relative trisection diagrams have been omitted. They appear in full in Figure \ref{DiagramStartingPoint}. 

	We have now constructed a relative trisection diagram for $X\setminus \nu(S\#P_\pm)$, and need to glue in the neighbourhood $N_\pm$ via the Price surgery map. By \cite[Corollary 5.3]{KimMil1808} a trisection diagram for $\Pi_{S\#P_\pm}(X)$ can be obtained by gluing together our diagram for $X\setminus \nu(S\#P_\pm)$ and a relative trisection diagram $\mathfrak{D}_{\nu(P_\pm)}$ for $N_\pm$, as in the schematic. It is important to note that this must be performed carefully; the surgery dictates which boundary components are identified. In fact, by \cite[Lemma 5.1]{KimMil1808} this choice essentially determines the arcs of the diagram, since the monodromy of $Q=\partial N_\pm$ is highly constrained (it consists of two Dehn twists about each boundary component, with signs as indicated). After using the monodromy algorithm of \cite{CasGayPin18a} to complete Figure \ref{schematic} with arcs (in the case of $P_-$), one obtains the trisection diagram for $\Pi_{S\#P_\pm}(X)$ illustrated on the left of Figure \ref{DiagramStartingPoint}.
	
	\begin{figure}[ht]
    	\includegraphics[width=0.7\textwidth]{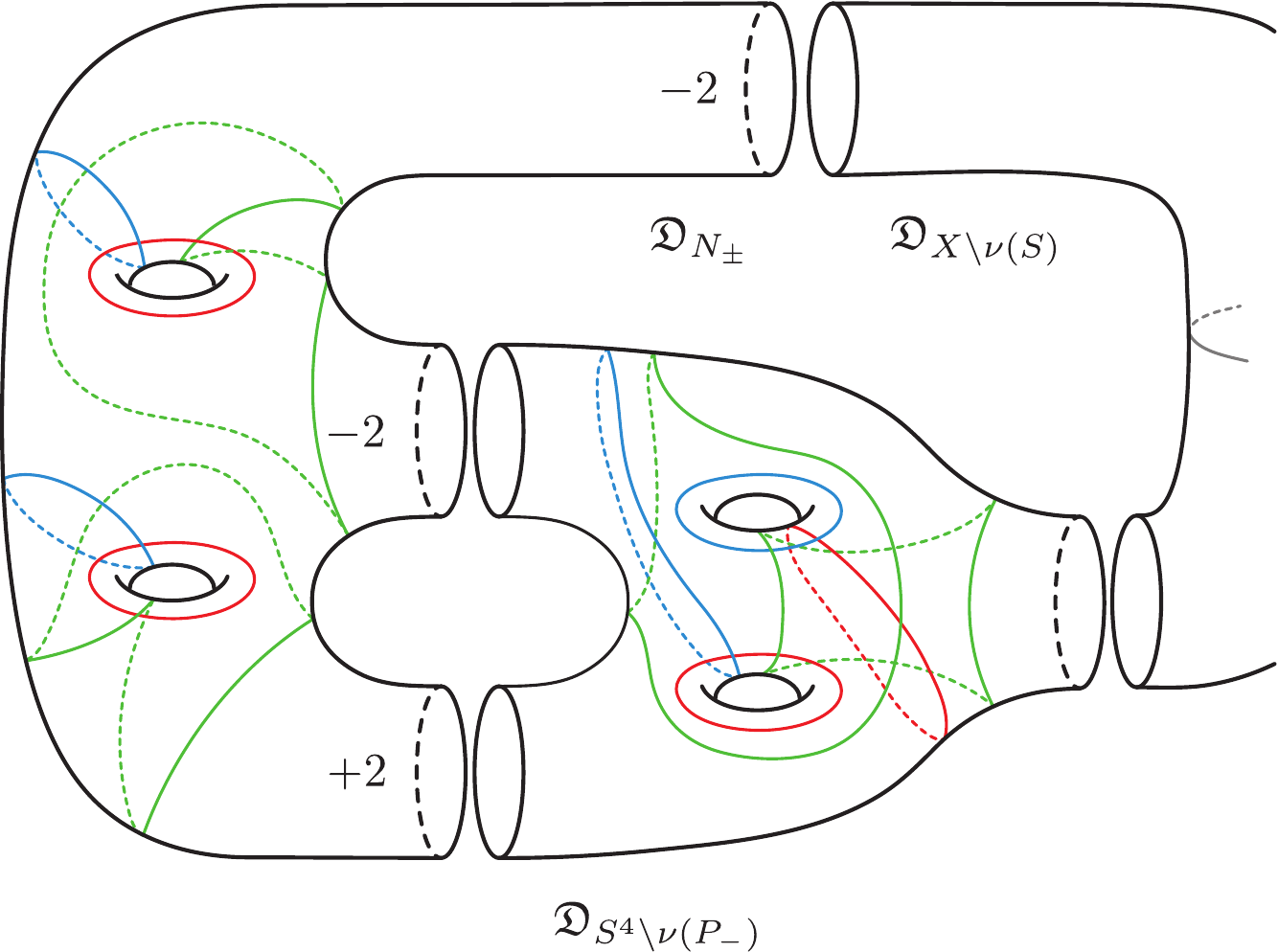}
    \caption{The origin of Figure \ref{DiagramStartingPoint}.}%
		\label{schematic}
    \end{figure}
    
    On the other hand, constructing a trisection diagram for $\Sigma_S(X)$ is more straightforward. By \cite[Theorem C]{GayMei1806}, such a diagram is given by $\mathfrak{D}_{X\setminus \nu(S)}\cup\mathfrak{D}_\mathfrak{b}$, together with arcs for each diagram. This is illustrated on the right of Figure \ref{DiagramStartingPoint}.
    
    Thus, if $\mathfrak{D}_{S^4\setminus \nu(P_\pm)}\cup \mathfrak{D}_{\nu(P_\pm)}$ can be converted through a sequence of trisection moves (i.e., a sequence of destabilizations, isotopy of curves, handle slides, and surface diffeomorphisms that do not modify $\mathfrak{D}_{X\setminus\nu(S)}$) to give $\mathfrak{D}_\mathfrak{a}$ (or $\mathfrak{D}_\mathfrak{b}$ or $\mathfrak{D}_\mathfrak{c}$), then we will have exhibited the fact that $\Pi_{S\#P_-}(X)$ is diffeomorphic to $\Sigma_S(X)$. In fact, a diagram for $\Pi_{S\#P_+}(X)$ can be obtained using the mirror image of $\mathfrak{D}_{S^4\setminus \nu(P_\pm)}\cup \mathfrak{D}_{\nu(P_\pm)}$ in Figure \ref{schematic}, and so to prove the statement for $P_\pm$ it suffices to prove it for $P_-$.
\end{proof}

\begin{remark}
Since $\Sigma_S(X)$ and $\Pi_{S\#P_\pm}(X)$ are indeed diffeomorphic by \cite{KatSaeTerYam99}, any trisection diagrams for $\Sigma_S(X)$ and $\Pi_{S\#P_\pm}(X)$ can be related by handle slides and isotopy, at least after stabilizations. A priori, one might expect that \emph{both} stabilizations and destabilizations are necessary to carry out the proof in this paper, but surprisingly this turns out not to be the case. Indeed, we will see in the next section that only destabilizations are required. 
\end{remark}

%%%%%%%%%%%%%%%%%%%%%%%%%%%%%%%%%%%%%%%%%%%%%%%%%%%%%%%%
\subsection{Diagrams}
%%%%%%%%%%%%%%%%%%%%%%%%%%%%%%%%%%%%%%%%%%%%%%%%%%%%%%%%

We complete the proof of Theorem \ref{kstythm} by proving the following proposition.

\begin{proposition}
	There is a sequence of destabilizations, isotopy of curves, handle slides, and surface diffeomorphisms that convert the trisection diagram $\mathfrak{D}$ in Figure \ref{DiagramStartingPoint} into $\mathfrak{D}_\mathfrak{b}$.
\end{proposition}

\begin{proof}
    The proof will be a step-by-step verification that this is possible. The strategy will be to perform handle slides and isotopy to transform $\mathfrak{D}$ so that Lemma \ref{lem:destab} applies, destabilize the diagram (i.e., surger a particular curve), and repeat. For organization, we will break the proof into steps.  
    
    \vspace{3mm}
    \noindent \textbf{Step 1:} We start by labelling some curves in Figure \ref{DiagramStartingPoint}. This is illustrated in Figure \ref{DiagramStartingPointLabelled}. We continue to adopt the convention that arcs in diagrams are colored lighter than closed curves, even though these are all closed curves in a trisection diagram for $\Pi_{S\#P_-}(X)$. Since we will perform many handle slides and destabilizations, any labels will be specific to each figure and will change during the proof. We will adopt the standard convention that $\alpha$, $\beta$, and $\gamma$ curves are colored red, blue, and green, respectively.

	\begin{figure}[ht]
		\centering
		\includegraphics[width=0.65\textwidth]{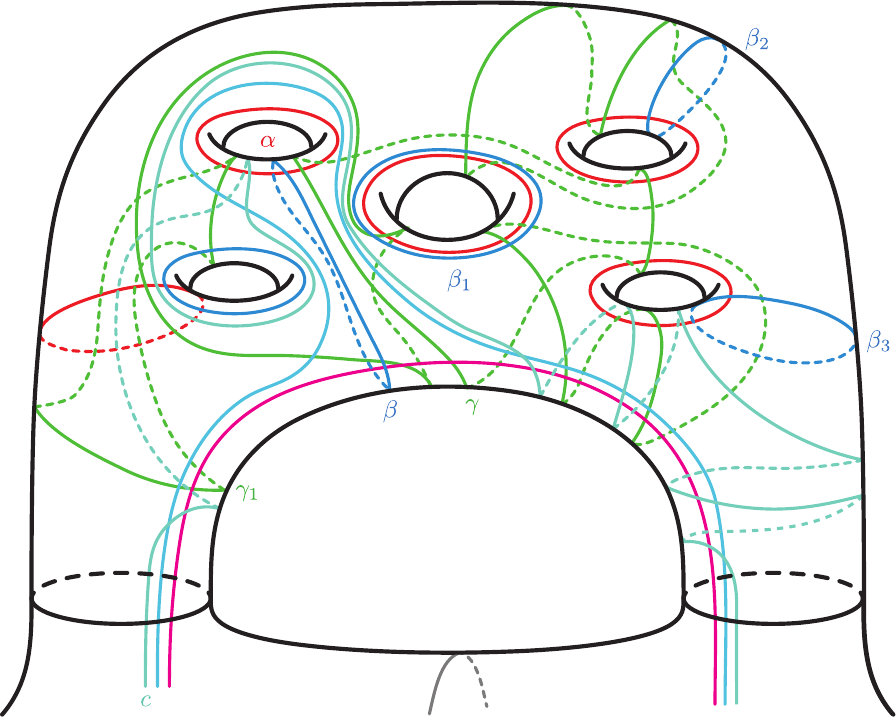}
		\caption{The diagram $\mathfrak{D}$ from Figure \ref{DiagramStartingPoint}, with labels.}%
		\label{DiagramStartingPointLabelled}
	\end{figure}

	Observe that $\alpha$ intersects both $\beta$ and $\gamma$ once. Moreover, we can easily make $\beta$ and $\gamma$ parallel after some handle slides. Specifically, slide $\beta$ over $\beta_1,\beta_2$ and $\beta_3$ to make it parallel to $\gamma$. The result of these slides is illustrated in Figure \ref{Step1}. 
	
		\begin{figure}[ht]
		\centering
		\includegraphics[width=0.65\textwidth]{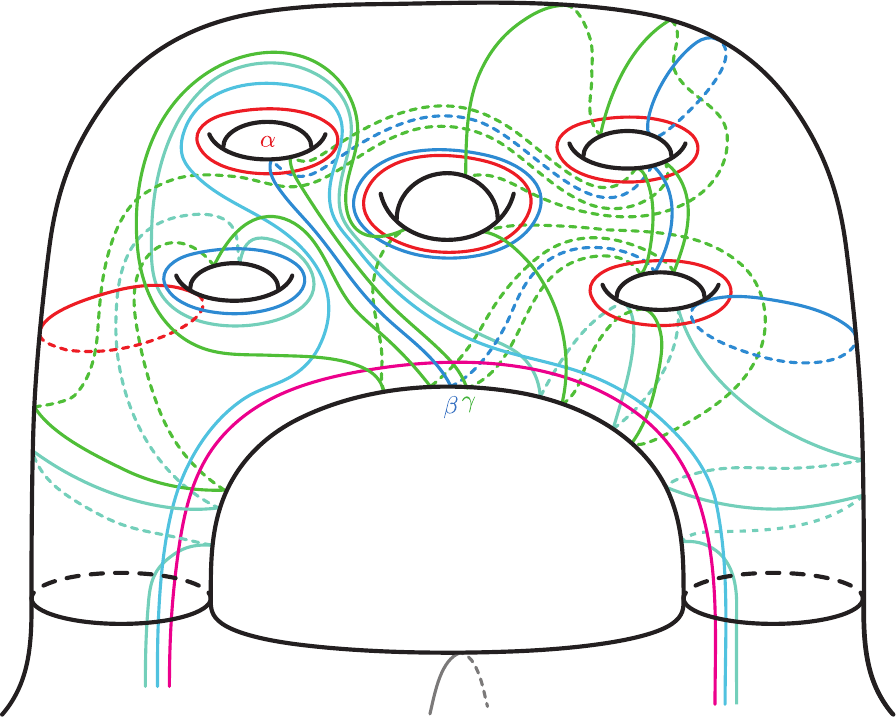}
		\caption{The diagram after performing some handle slides to Figure \ref{DiagramStartingPointLabelled} in Step 1.}%
		\label{Step1}
	\end{figure}

	We can now apply Lemma \ref{lem:destab} to the curves $\alpha$, $\beta$, and $\gamma$ in Figure \ref{Step1} and destabilize this diagram. To destabilize, we surger the surface along the $\alpha$ curve and erase the $\beta,\gamma$ curves. The result of this process is illustrated in Figure \ref{RedrawnAfterStep1}.

	\begin{figure}[ht]
		\centering
		\includegraphics[width=0.65\textwidth]{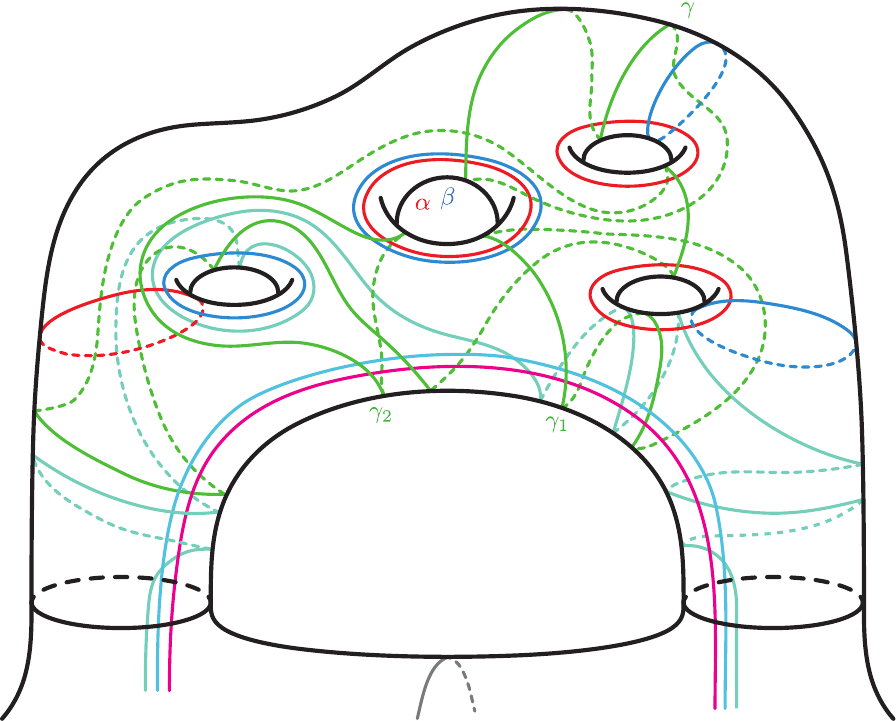}
		\caption{The diagram after destabilizing Figure \ref{Step1} in Step 1.}%
		\label{RedrawnAfterStep1}
	\end{figure}
    
    \vspace{3mm}
    \noindent \textbf{Step 2:}
    We observe that in Figure \ref{RedrawnAfterStep1}, the curves $\alpha$ and $\beta$ are parallel and intersect $\gamma$ once. Consequently, we can slide $\gamma_1$ and $\gamma_2$ over $\gamma$ to obtain the diagram in Figure \ref{Step2}.
    
	\begin{figure}[ht]
		\centering
		\includegraphics[width=0.65\textwidth]{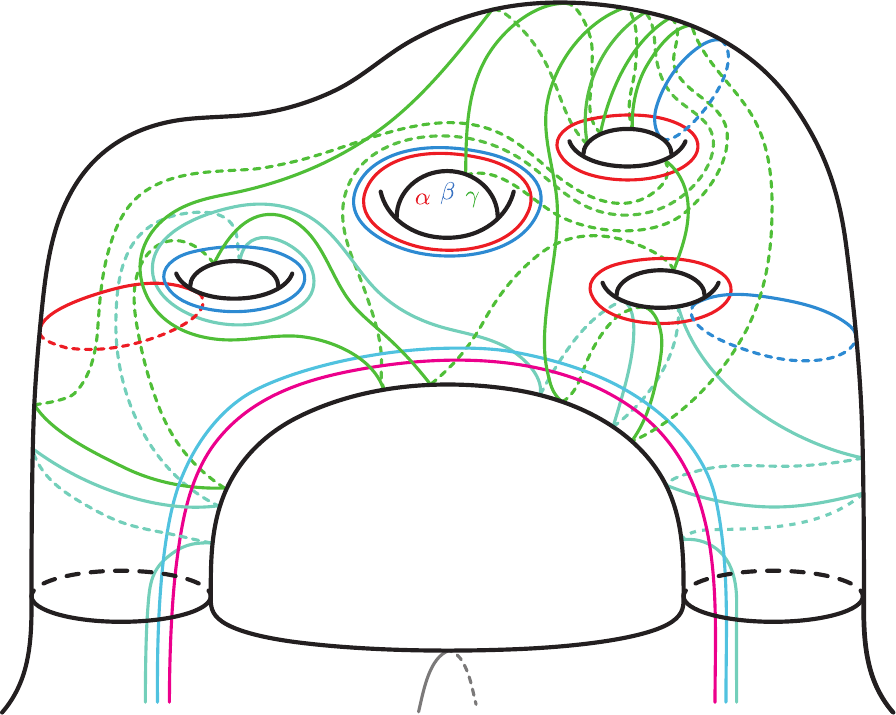}
		\caption{The diagram after performing some handle slides to Figure \ref{RedrawnAfterStep1} in Step 2.}%
		\label{Step2}
	\end{figure}    
    
    In Figure \ref{Step2}, we can now apply Lemma \ref{lem:destab} to the curves $\alpha$, $\beta$, and $\gamma$. To destabilize, we erase the $\beta$ and $\gamma$ curves, and surger the surface along the $\alpha$ curve. The result of this process is illustrated in Figure \ref{RedrawnAfterStep2}.

	\begin{figure}[ht]
		\centering
		\includegraphics[width=0.65\textwidth]{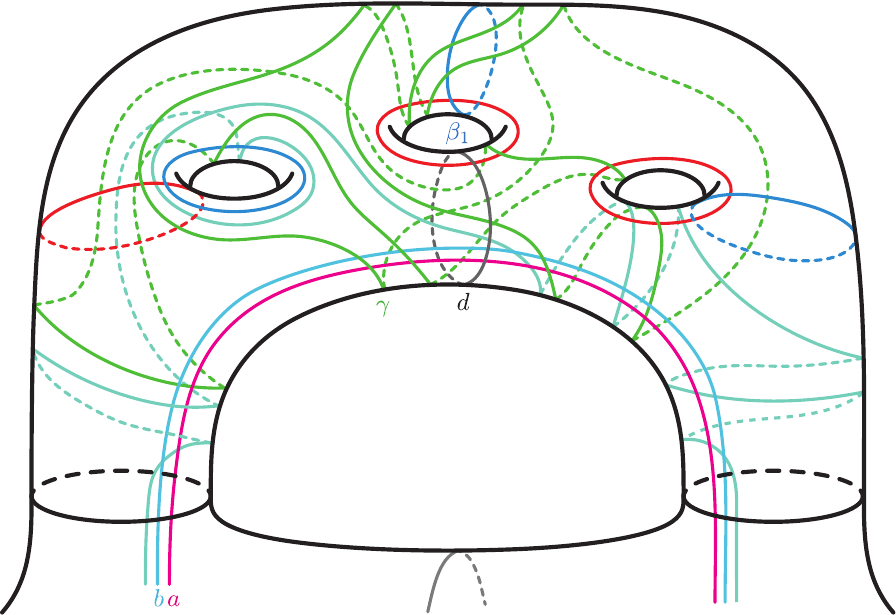}
		\caption{The diagram after destabilizing Figure \ref{Step2} in Step 2.}%
		\label{RedrawnAfterStep2}
	\end{figure}
	
    \vspace{3mm}
    \noindent \textbf{Step 3:}
	We now note that in Figure \ref{RedrawnAfterStep2}, the curve $\gamma$ meets the arcs $a$ and $b$ exactly once. Moreover, since the trisection for $X\setminus \nu(S)$ is 0-annular, $a$ and $b$ can be assumed to be parallel outside of this part of the diagram. Thus after some handle slides, we will be able to destabilize using $a,b$ and $\gamma$.

	In order to do this, we first arrange $\gamma$ to look less complicated. We perform two Dehn twists along the curve $\beta_1$ and one Dehn twist along the curve labelled $d$. After doing this, we obtain the diagram illustrated in Figure \ref{Step3Halfway}.

	\begin{figure}[ht]
		\centering
		\includegraphics[width=0.65\textwidth]{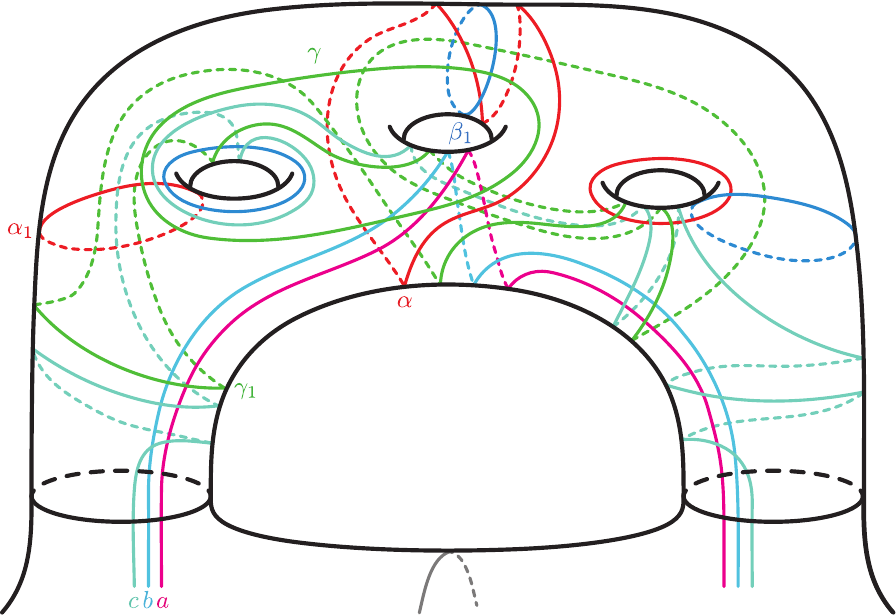}
		\caption{The diagram in Figure \ref{RedrawnAfterStep2} after three Dehn twists in Step 3.}%
		\label{Step3Halfway}
	\end{figure}

	Now that $\gamma$ looks simpler, we perform some additional handle slides so that we may appeal to Lemma \ref{lem:destab}. In Figure \ref{Step3Halfway}, slide $\alpha$ over $\alpha_1$, and then $\alpha_1$ over $a$. Next, slide $\beta_1$ over $b$. Last, slide $c$ over $\gamma_1$ and $\gamma$. Note that although $a$, $b$, and $c$ appear as arcs, they are actually closed curves in this trisection diagram. This process removes all extraneous intersections with $\gamma$, and the result of these handle slides is illustrated in Figure \ref{Step3}.

	\begin{figure}[ht]
		\centering
		\includegraphics[width=0.65\textwidth]{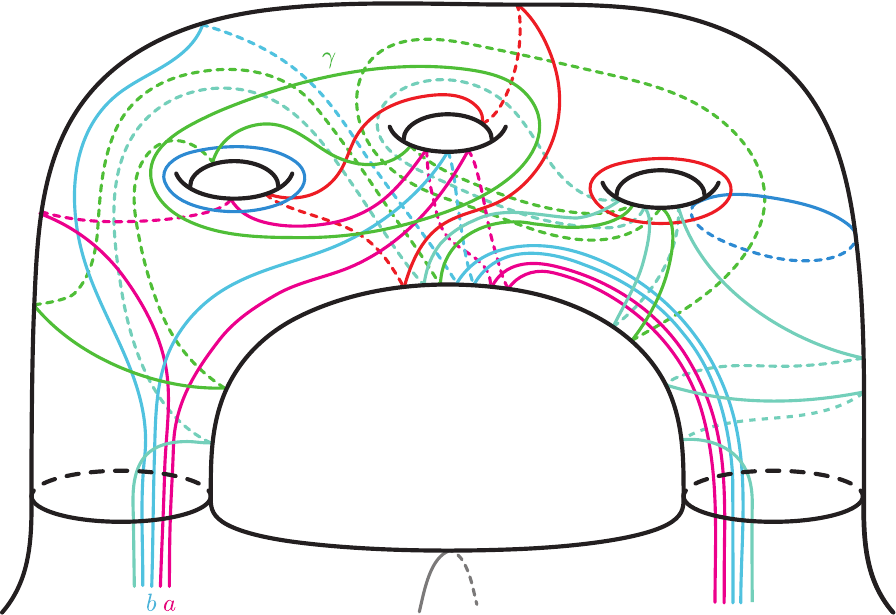}
		\caption{The diagram in Figure \ref{Step3Halfway} after performing some handle slides in Step 3.}%
		\label{Step3}
	\end{figure}

	We now use Lemma \ref{lem:destab} to destabilize the diagram in Figure \ref{Step3} using the curves $a$, $b$, and $\gamma$. To do so, we erase $a$ and $b$ and surger the surface using $\gamma$. This takes slightly more visualizing than the previous two destabilizations, but the result after a mild isotopy is illustrated in Figure \ref{RedrawnAfterStep3}.
	
	Note that while this process removes the curves $a$ and $b$ from $\mathfrak{D}_{X\setminus \nu(S)}$, our earlier slides produced a second copy of these curves, and so $\mathfrak{D}_{X\setminus \nu(S)}$ remains unchanged. 
	
	\begin{figure}[ht]
		\centering
		\includegraphics[width=0.65\textwidth]{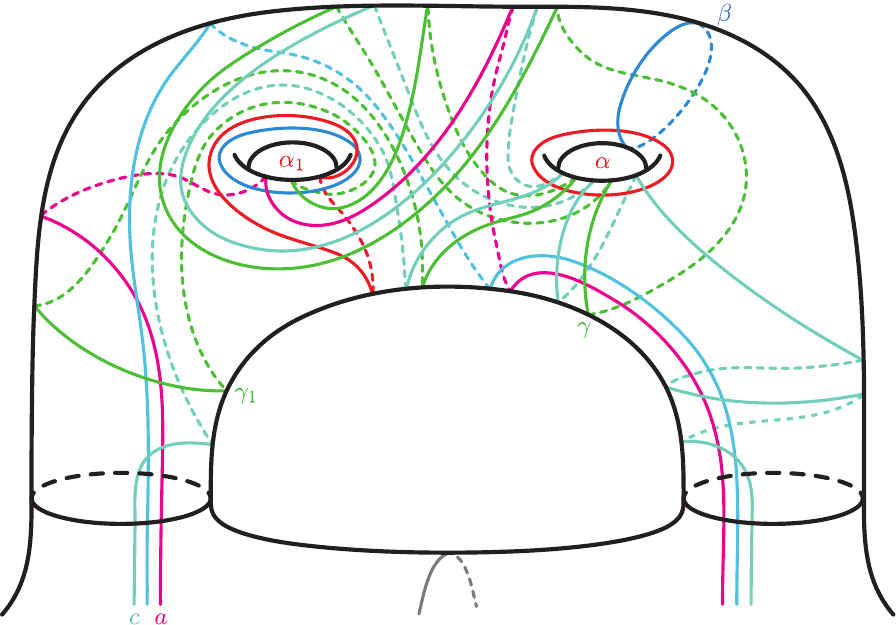}
		\caption{The diagram after destabilizing Figure \ref{Step3} in Step 3.}%
		\label{RedrawnAfterStep3}
	\end{figure}
	
	\vspace{3mm}
	\noindent \textbf{Step 4:}
	This step is similar to Step 3, but slightly more involved. We note that in Figure \ref{RedrawnAfterStep3}, the curve $\beta$ intersects the $\alpha$ and $\gamma$ curves each once. If we can arrange $\alpha$ and $\gamma$ to be parallel, we will be able to use Lemma \ref{lem:destab} to destabilize the diagram again.

	To this end, perform a Dehn twist to make the curve $\alpha_1$ appear as a standard longitude of the leftmost hole of the surface. Then, to simplify the diagram, slide the curve $c$ over $\gamma$. Now slide $a$ over both $\alpha$ curves repeatedly so that it is parallel to $c$. Lastly, slide $\gamma$ over $c$ and $\alpha$ over $a$ so that these curves no longer intersect $\beta$.
	
	\begin{figure}[ht]
		\centering
		\includegraphics[width=0.65\textwidth]{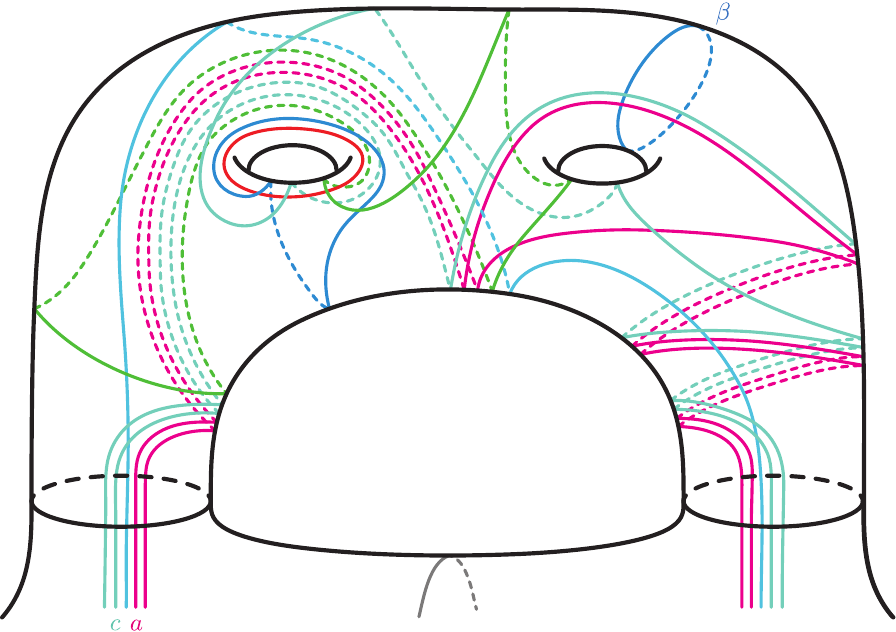}
		\caption{The diagram in Figure \ref{RedrawnAfterStep3} after one Dehn twist and several handle slides in Step 4.}%
		\label{Step4}
	\end{figure}

	The result after this Dehn twist and these handle slides is illustrated in Figure \ref{Step4}. Using Lemma \ref{lem:destab}, we can now use the curves $a$, $\beta$, and $c$ to destabilize the diagram. To do so, we erase the $a$ and $c$ curves, and surger the surface using $\beta$. The result of this process is illustrated in Figure \ref{RedrawnAfterStep4}. As before, $\mathfrak{D}_{X\setminus \nu(S)}$ remains unchanged. 

	\begin{figure}[ht]
		\centering
		\includegraphics[width=0.65\textwidth]{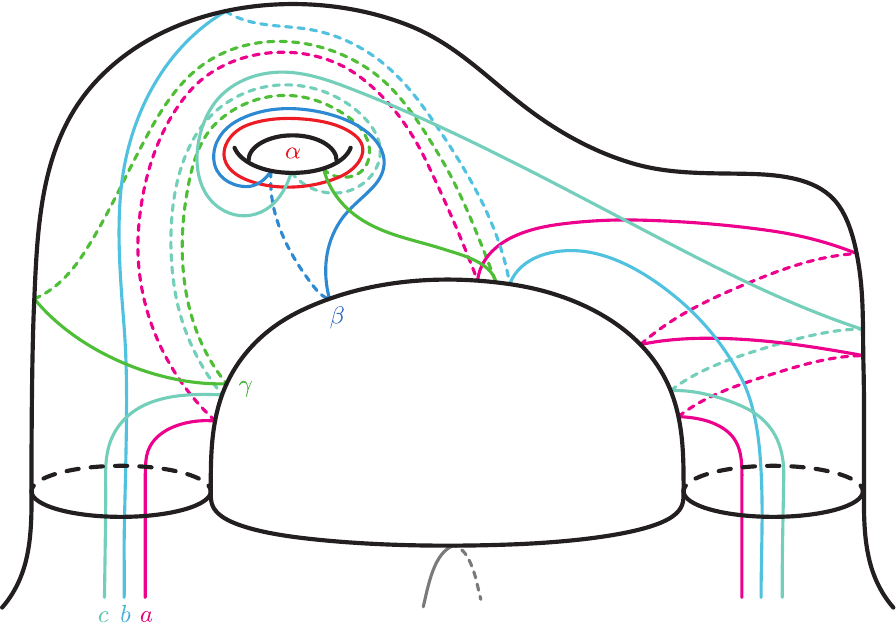}
		\caption{The diagram in Figure \ref{Step4} after a destabilization in Step 4.}%
		\label{RedrawnAfterStep4}
	\end{figure}
	
	\vspace{3mm}
	\noindent \textbf{Step 5:}
	We only need to perform one more destabilization. In Figure \ref{RedrawnAfterStep4}, slide $a$ over $\alpha$ twice. Next, do a Dehn twist along $\alpha$ to make the curve $\beta$ appear as a standard meridian of the hole in the surface. We can now slide $b$ over $\beta$ to make it parallel to $c$, and the resulting pair of curves both intersect $\alpha$ exactly once. 
	To apply Lemma \ref{lem:destab}, we only need to perform handle slides to remove all other intersections with $\alpha$. To do this, slide $\beta$ over the new $b$ curve and $\gamma$ over $c$. The result of this Dehn twist and these handle slides is illustrated in Figure \ref{Step5}.
	
	\begin{figure}[ht]
		\centering
		\includegraphics[width=0.65\textwidth]{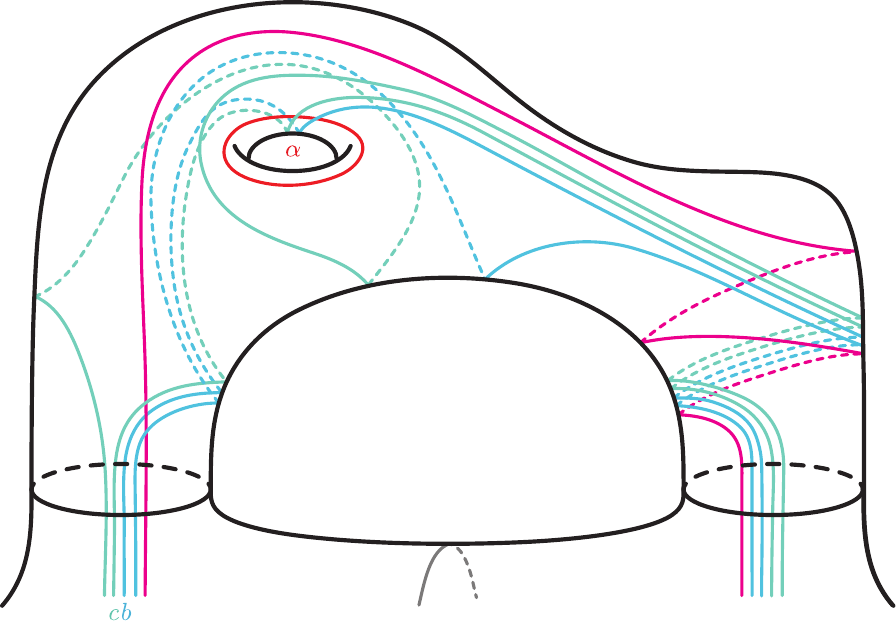}
		\caption{The diagram in Figure \ref{RedrawnAfterStep4} after performing a Dehn twist and several handle slides in Step 5.}%
		\label{Step5}
	\end{figure}
	
	We can now apply Lemma \ref{lem:destab}, and destabilize the diagram in Figure \ref{Step5} using the curves $\alpha$, $b$, and $c$. To do this, we erase the $b$ and $c$ and surger the surface using $\alpha$. The result of this process is illustrated in Figure \ref{RedrawnAfterStep5}.

	\begin{figure}[ht]
		\centering
		\includegraphics[width=0.5\textwidth]{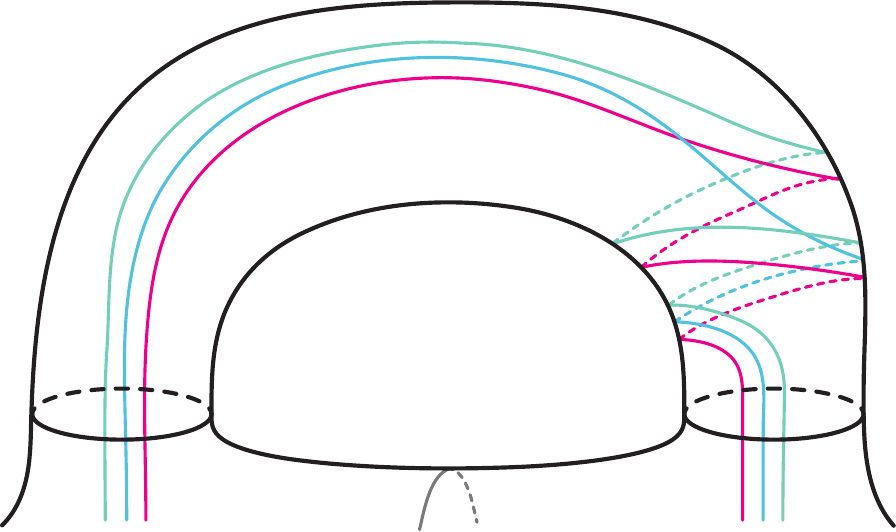}
		\caption{The diagram in Figure \ref{Step5} after a destabilization in Step 5. After Dehn twists, this diagram is equivalent to the diagram on right side of Figure \ref{DiagramStartingPoint}.}%
		\label{RedrawnAfterStep5}
	\end{figure}

	Up to Dehn twists, we see that the diagram in Figure \ref{RedrawnAfterStep5} is in fact equivalent to the diagram $\mathfrak{D}_{\mathfrak{b}}$.	This completes the proof.
\end{proof}

%%%%%%%%%%%%%%%%%%%%%%%%%%%%%%%%%%%%%%%%%%%%%%%%%%%%%%%%
%%%%%%%%%%%%%%%%%%%%%%%%%%%%%%%%%%%%%%%%%%%%%%%%%%%%%%%%
\section{Further questions}\label{sec:questions}
%%%%%%%%%%%%%%%%%%%%%%%%%%%%%%%%%%%%%%%%%%%%%%%%%%%%%%%%
%%%%%%%%%%%%%%%%%%%%%%%%%%%%%%%%%%%%%%%%%%%%%%%%%%%%%%%%

Even though the proof in Section \ref{sec:proof} is a seemingly ad hoc sequence of moves, one might hope to apply similar trisection diagrammatic methods to show that various homotopy 4-spheres are standard. Unlike Kirby diagrams, trisection diagrams offer three seemingly symmetric possible destabilizations. It would be interesting to see if this additional flexibility provides any insight into the handle decompositions of any homotopy 4-spheres that are not known to be diffeomorphic to $S^4$. 

In particular, \cite[Theorem C]{GayMei1806} gives a potential method to show that a given Gluck twist $\Sigma_S(S^4)$ is standard. Starting with an embedded 2-sphere $S$ in $(1;1)$-bridge position, one could attempt to destabilize the resulting trisection diagram of $\Sigma_S(S^4)$ to one which describes $S^4$.  

\begin{question}
Can trisection diagrammatic methods be used to understand the handle structure of homotopy 4-spheres such as Gluck twists?
\end{question}

However, even if $\Sigma_S(S^4)\cong S^4$, it remains an open question whether all trisection diagrams for $S^4$ are \emph{standard}, i.e., are stabilizations of the $(0;0)$-trisection of $S^4$. Whether this is the case is a conjecture of Meier-Schirmer-Zupan \cite{MeiSchZup16}.

\begin{conjecture}
Every trisection of $S^4$ is isotopic to the $(0;0)$-trisection or a stabilization of the $(0;0)$-trisection. 
\end{conjecture}

\bibliography{References}
\bibliographystyle{alpha}

\end{document}